\begin{document}

\title[$p$-adic families of modular forms for Hodge type Shimura varieties]{$p$-adic families of modular forms for Hodge type Shimura varieties with non-empty ordinary locus}

\author{Riccardo Brasca}
\email{\href{mailto:riccardo.brasca@imj-prg.fr}{riccardo.brasca@imj-prg.fr}}
\urladdr{\url{http://www.imj-prg.fr/~riccardo.brasca/}}
\address{Institut de Math\'ematiques de Jussieu-Paris Rive Gauche\\
Université de Paris\\
Paris\\
France}

\subjclass[2020]{Primary: 11F33; Secondary: 11F55}
\keywords{$p$-adic families of modular forms, Hodge type Shimura varieties}

\begin{abstract}
We generalize some of the results of \cite{AIP} and \cite{pel} to Hodge type Shimura varieties having non-empty ordinary locus. For any $p$-adic weight $\kappa$, we give a geometric definition of the space of overconvergent modular forms of weight $\kappa$ in terms of sections of a sheaf. We show that our sheaves live in analytic families, interpolating the classical sheaves for integral weights. We  define an action of the Hecke algebra, including a completely continuous operator at $p$. In some simple cases,  we also build the eigenvariety.
\end{abstract}

\date{\today}
\maketitle

\section*{Introduction}
Let $p$ be a fixed prime number. The study of congruences modulo $p$ between modular forms has a long story, going back to Serre and Katz in the seventies and then to the work of Hida in the eighties about ordinary forms. The theory for $\GL_2/\Q$ culminated with the construction, by Coleman and Mazur, of the \emph{eigencurve}, that parameterizes system of eigenvalues for finite slope overconvergent modular forms. This subject has become more and more important in number theory. It was already a fundamental tool in the proof of Fermat's last theorem, and the spectacular recent results about potential modularity of Galois representations rely, among other things, on the theory of $p$-adic families of automorphic forms.

A good theory of $p$-adic families of Galois representations exists for many algebraic groups different from $\GL_2/\Q$, so the Langlands program philosophy suggests that there should be a good theory of $p$-adic families of automorphic forms for a large class of reductive groups. The approach of Coleman and Mazur, based on $q$-expansion, seems difficult to generalize. On the other hand, using the theory of modular symbols of Ash and Stevens in \cite{AshSt}, Urban in \cite{urban} and Hansen in \cite{hansen}, were able to construct families for any reductive group with discrete series. Another possible approach, still cohomological, is the use of completed cohomology as done by Emerton in \cite{emerton}. In all these constructions, the theory focuses on systems of eigenvalues appearing in the space of overconvergent modular forms (or in the space of some cohomology classes) rather than on modular forms themselves. One of the reasons for this is the lack of the notion of families of overconvergent modular forms (while the notion of family of systems of eigenvalues is easier to define). In the present paper  we define the notion of families of  forms, rather then just the notion of system of eigenvalues. The starting point is the geometric approach introduced by Andreatta, Iovita, Pilloni, and Stevens in a series of papers. The basic idea is quite simple: analytically interpolate the sheaves $\omega^k$, where $k$ is an integral weight, defining, over some strict neighborhood of the ordinary locus, the sheaves $\omega^\kappa$ for any $p$-adic weight $\kappa$. This has been done for Hilbert modular varieties in \cite{over, vincent, AIPhilb}, for Siegel varieties in \cite{AIP} and for PEL-type Shimura varieties with non-empty ordinary locus in \cite{pel}. (All these constructions are for cuspidal forms, see \cite{BrasRos} for what can be done without the cuspidality assumption.) The case of PEL type Shimura varieties without ordinary locus has been considered in \cite{mu_eigen, EischenMantovan} for ordinary forms and in \cite{hernandez} for overconvergent forms. Our goal in this paper is to generalize the construction to the case of Shimura varieties of Hodge type with ordinary locus.

Let us be more precise about our results. We assume that $p \neq 2$. Let $(G,X)$ be a Shimura datum of Hodge type, with associated Shimura variety $S$. By definition this means that there is a closed immersion $S \hookrightarrow \widetilde S$, where $\widetilde S$ is a Siegel variety. We work with adic spaces, but we also need a good theory of integral model of our varieties and sheaves, to use \cite{kisin} and \cite{lovering}, so we make some technical assumptions about $G$ and the level structure, that we will not recall in this introduction, see Subsection \ref{subsec: hodge type} for details. The integral (dominant) weights for $S$ parameterize irreducible algebraic representations of the Levi of a fixed parabolic subgroup of $G$. These weights can be interpolated by the weight space $\mathcal{W}$, that parameterizes $p$-adic weights. For any integral weight $k$, the general theory of Shimura varieties provides us with an automorphic sheaf $\omega^k$ on $S$. Let now $\widetilde S(v)$ be the strict neighborhood of the ordinary locus of $\widetilde S$ defined by the condition that the Hasse invariant has valuation smaller or equal than $v$ and let us denote by $S(v)$ its pullback to $S$. \emph{We assume $S(v)$ to be non-empty}. Note that we did not specify the level of the Shimura variety: as usual outside $p$ it only needs to be small enough, but at $p$ it is Iwahoric, see Subsection \ref{subsec: modifications}. Our main result is the following
\begin{teono}
Let $\kappa \in \mathcal{W}$ be a $p$-adic weight. Then there is a $v > 0$, depending on $\kappa$, and a sheaf $\omega^\kappa$ on $S(v)$ such that the following holds. The sheaves $\omega^\kappa$ live in analytic families and interpolate the sheaves $\omega^k$, for $k$ an integral weight. Moreover, we have an action of the unramified Hecke algebra for $G$ on the space of global sections of $\omega^\kappa$ and also an action of a completely continuous operator at $p$.
\end{teono}
In the Siegel case, the strategy to prove the theorem is the following. Let $\widetilde A \to \widetilde S$ be the universal abelian variety over $\widetilde S$ and let $\omega$ be its conormal sheaf. Using the Hodge-Tate morphism (see \cite[Subsection 3.2]{AIP}), one constructs a \emph{new integral structure} $\mathcal{F} \subseteq \omega$. One then proves that $\mathcal{F}$ is a locally free sheaf and that $\mathcal{F}/p$ is equipped with a local basis of global sections, called the \emph{marked sections}. Essentially they are the image, via the Hodge-Tate map, of the universal sections of the canonical subgroup, that exist by definition thanks to the level at $p$ of $\widetilde S$. One then builds a torsor, for the Iwahori subgroup of $\GL_g$, considering certain basis of $\mathcal{F}$ that are compatible with the marked sections. The required sheaves are obtained by considering functions on this torsor that are homogeneous of the required weight for the action of the Iwahori subgroup. Here we are simplifying the situation (see \cite[Section 5]{AIP}), but we want to stress that, once one has the sheaf $\mathcal{F}$ and the marked sections, the construction, even if technically difficult, is rather formal. In the PEL case the situation is the same: the canonical subgroup and the Hodge-Tate map respect the PEL structure, for example the canonical subgroup is automatically stable under the action of the totally real field, and the Hodge-Tate map is equivariant. This allows one to take into account the extra structure everywhere and to build a torsor for the Iwahori of the relevant group. The definition of the sheaves is then similar to the Siegel case.

The situation is different for Shimura varieties of Hodge type. First of all, the embedding $S \hookrightarrow \widetilde S$ does not give any canonical extra structure to work with. The solution to this problem, that goes back to Deligne, is to introduce a non canonical extra structure, as follows. Let $V$ be the symplectic space associated to $\widetilde S$. Then, there exists $W \subseteq V^\otimes \colonequals \bigoplus_{a, b \in \N} V^{\otimes a} \otimes (V^\vee)^{\otimes b}$ such that $G \subseteq \GSp(V)$ is the subgroup that stabilizes $W$. There is a natural filtration on $V$ and we will write $\Sigma$ for $\Fil^1(V)$. It is a representation of $\widetilde M$, the Levi factor of the Siegel parabolic of $\GSp(V)$. The automorphic sheaf $\Sigma_{\dR}$ on $S$ associated to $\Sigma$ is by definition the pullback to $S$ of $\omega$. There is a filtration also on $W$, and we let $\Theta$ be the associated graded object. This is a representation of $M$, the Levi of the Siegel parabolic of $G$, and so it has an associated sheaf $\Theta_{\dR}$, that does not come from $\widetilde S$. All this objects are equipped with the integral structures, denoted with a $+$ as superscript. In particular we have $\Sigma_{\dR}^+ \subseteq \Sigma_{\dR}$ and we also have the modification $\Sigma_{\dR}^\sharp$ of $\Sigma_{\dR}^+$, corresponding to the pullback to $S$ of $\mathcal{F}$. We have that $\Sigma_{\dR}^\sharp/p$ is equipped with a set of marked sections. The main technical difficulty of our work is the definition of a modification $\Theta_{\dR}^\sharp$ of $\Theta_{\dR}^+$ and a set of marked sections of $\Theta_{\dR}^\sharp/p$. By construction we have $\Theta_{\dR} \subseteq \Sigma_{\dR}^\otimes$ and we define
\[
\Theta^\sharp_{\dR} \colonequals \Theta_{\dR} \bigcap \left( \Sigma_{\dR}^\sharp \right)^\otimes.
\]
Note that, even if $\Sigma_{\dR}^\sharp$ is locally free, it is not clear whether the same holds for $\Theta_{\dR}^\sharp$. To prove this, and to get the marked sections, we consider the Hodge-Tate map
\[
\HT \colon C^\otimes \to \left( \Sigma_{\dR}^\sharp \right)^\otimes /p,
\]
where $C$ is the canonical subgroup. (To be more precise, we will need to consider higher level canonical subgroups, but let us ignore this issue.) Thanks to the level structure at $p$, we have that $C$ is canonically isomorphic to $\Sigma^+/p$. In particular we can view $W^+/p$ as a subspace of $C^\otimes$. It is equipped with a canonical basis. We then have the following result, see Proposition \ref{prop: sharp works}.
\begin{keyprop}
The image via $\HT$ of $W^+/p \subseteq C^\otimes$ is included in $\Theta_{\dR}^\sharp/p$, and the image of the canonical basis of $W^+/p$ gives a set of marked sections of $\Theta_{\dR}^\sharp/p$. Moreover, $\Theta_{\dR}^\sharp$ is a locally free sheaf.
\end{keyprop}
We want to stress that the definition of the subspace $W^+/p \subseteq C^\otimes$, that corresponds to the extra structure that exists on the canonical subgroup, is rather {\it{ad hoc}} and so it is not clear a priori that the Hodge-Tate maps should respect it. The proof of the proposition consists in a detailed study of the Hodge-Tate map, using the fact that the result is true on the ordinary locus, where $\Sigma_{\dR}^+ = \Sigma_{\dR}^\sharp$. This proposition is related to a result of Caraiani and Scholze in \cite{car-scho} concerning the Hodge-Tate filtration of $A$, see Remark \ref{rmk: caraiani scholze}. Once we have $\Sigma_{\dR}^\sharp$ and the set of marked sections, the definition of the sheaves works without much difficulties.

Here is a detailed explanation of what we do in each section of the paper. In Section \ref{sec: alg theory} we introduce the Shimura varieties we will work with, and their integral models. We also fix the various representation theory data we will need and we recall how automorphic sheaves are associated to algebraic representation in our setting. We then introduce classical modular forms for $G$. Section \ref{sec: p-adic} is the core of our work. We first of all recall the theory of the canonical subgroup and we introduce the tower of Shimura varieties we need. These are called $\widetilde{\Ig}$ and $\Ig$: the former comes from $\widetilde S$ and essentially parameterizes trivializations of the canonical subgroup, ignoring the extra datum coming from $W$, while the latter takes into account this structure. We then prove our key proposition and we explain why our construction does not depend on the choice of $W$. Finally we introduce $p$-adic families of modular forms. In Section \ref{sec: Hecke} we introduce Hecke operators. The action of the unramified Hecke algebra poses no problem. The $\U$ operator is more complicated: we define it as the trace of the Frobenius morphism, that is defined as usual taking the quotient by the canonical subgroup. Here we have to check that if $x \in S$ corresponds to an abelian variety $A$ with canonical subgroup $C$, then the point of $\widetilde S$ corresponding to $A/C$ also lies in $S$ (something very easy in the PEL case). To prove this we use a result of Noot in \cite{noot} that describes $S(0)$ inside $\widetilde S(0)$ using Serre-Tate coordinates: we have that $S(0)$ corresponds to a subtorus of the completed local ring of $\widetilde S(0)$ and in particular it is stable under the quotient by the kernel of the Frobenius. We finally show that the $\U$ operator is completely continuous. This allows us to build the eigenvariety in the proper case. In Section \ref{sec: orthogonal} we give an example of our construction, considering the case of orthogonal Shimura varieties associated to a spinor group. We also explain how our construction gives the eigenvariety for the modular forms considered in Borcherds' theory.

Let us finish this introduction explaining what we do \emph{not} do in this paper. We construct the eigenvariety only on some specific cases, see Subsections \ref{subsec: eigenvar proper} and \ref{subsec: orth form}. To obtain them in general, first of all one should extend our construction to a fixed toroidal compactification of $S$. These compactifications, even at integral level, are constructed by Madapusi Pera in \cite{madapusi}: since they carry a universal semi-abelian scheme, we think that our constructions extend, but we will not write the details. In any case to obtain the eigenvariety one would need to have a vanishing result for the higher direct images from the toroidal to the minimal compactification of the sheaf of cuspidal forms, as in \cite[Subsection 8.2]{AIP}. This has been proved for PEL type Shimura varieties by Lan in \cite{lan_ram} and used in \cite{pel}, but it is not known for general Hodge type Shimura varieties. We do not have any classicity result: the techniques used in \cite{class} for the PEL case, that rely on the study of the geometry of $S$, seem difficult to adapt to our situation. Finally, we treat in this paper the case of Shimura varieties of Hodge type but we think that our techniques should work also for Shimura varieties of abelian type. 

Recently Boxer and Pilloni have announced a very general construction of eigenvarieties for Shimura varieties of abelian type without any assumption on the non-emptiness of the ordinary locus. We still believe that, in our more restrictive situation, our presentation is so elementary and straightforward that it might be interesting to have it recorded.

\subsection*{Acknowledgments} We would like to thank Fabrizio Andreatta for suggesting the problem and for several useful discussions.

\subsection*{Notations and conventions} Let $p > 2$ be a prime, fixed from now on. We fix an algebraic closure $\overline \Q_p$ of $\Q_p$ and an isomorphism $\C \stackrel{\sim}{\longrightarrow} \overline \Q_p$. We will write $\A_f$ for the ring of finite adeles of $\Q$ and $\A_f^p$ for the finite adeles away from $p$. The profinite completion of $\Z$ will be denoted by $\widehat \Z$. We will write $M^\otimes$ for the direct sum of all objects obtained from $M$ using tensor products and duals (in the appropriate category), so $M^\otimes = \bigoplus_{a, b \in \N} M^{\otimes a} \otimes (M^\vee)^{\otimes b}$. Usually, base-change will be denoted with a subscript.

Through the paper, objects relative to the Siegel variety will be denoted using a tilde, for example the group of symplectic similitudes will be denoted by $\widetilde G$, and the corresponding objects for the Hodge type Shimura variety will be denoted by the same symbol, without the tilde, so the relevant reductive group will be denoted by $G$. We will explain our constructions in the Hodge type case, but we will often use the corresponding constructions in the Siegel case just adding the tilde.

\section{Algebraic theory} \label{sec: alg theory}
\subsection{Shimura varieties of Hodge type} \label{subsec: hodge type} If $g \geq 1$ is an integer, we consider the group $\widetilde G =\GSp_{2g} / \Q$ of symplectic similitudes of $V = \Q^{2g}$ equipped with the pairing $\Psi$ induced by the matrix $\begin{pmatrix}  & s \\ -s & \end{pmatrix}$, where $s$ is the $g\times g$ anti-diagonal matrix with non-zero entries equal to $1$. Let $S^\pm$ be the usual union of the Siegel upper-half spaces. Then $(\widetilde G, S^\pm)$ is a Shimura datum. If $\widetilde K \subseteq \widetilde G(\A_f)$ is a neat compact open subgroup, then we obtain the Shimura variety $\widetilde S_{\widetilde K}$ over $\Q_p$. We have that $\widetilde S_{\widetilde K}$ is a fine moduli space of $g$-dimensional principally polarized abelian varieties, with $\widetilde K$-level structure.

Let $\Lambda \subseteq V$ be a $\Z$-lattice such that $\Lambda_{\widehat \Z}$ is $\widetilde K$-stable. We suppose moreover that $\widetilde K$ is of the form $\widetilde K = \widetilde K^p \times \widetilde K_p$, where $\widetilde K^p \subseteq \widetilde G(\A_f^p)$ is a compact open and $\widetilde K_p \subseteq \widetilde G(\Q_p)$ is a maximal compact open subgroup. If $\widetilde K^p$ is small enough, then there is a canonical integral model $\widetilde{\mathcal{S}}_{\widetilde K}$ of $\widetilde S_{\widetilde K}$ over $\Z_p$ that solves the same moduli problem. We have that $\widetilde{\mathcal{S}}_{\widetilde K}$ is smooth if and only if $\widetilde K^p$ is hyperspecial, which is equivalent to the restriction of $\Psi$ to $\Lambda_{\Z_p}$ being a perfect pairing.

Let $(G,X)$ be a Shimura datum of \emph{Hodge type}. This means that there exist an embedding $(G,X) \hookrightarrow (\widetilde G, S^\pm)$ of $(G,X)$ into a Siegel datum as above. We write $E$ for the completion of the reflex field of $(G,X)$ with respect to our fixed isomorphism $\C \stackrel{\sim}{\longrightarrow} \overline \Q_p$. It is a finite extension of $\Q_p$. Let $K \subseteq G(\A_f)$ be a neat compact open subgroup. We hence have the Shimura variety $S_K$ defined over $E$. By the results of \cite{deligne}, there is a compact open $\widetilde K \subseteq \widetilde G(\A_f)$ such that $K = \widetilde K \cap \widetilde G(\A_f)$ and there is a closed immersion
\[
S_K \hookrightarrow \widetilde S_{\widetilde K} \otimes E.
\]
We suppose from now on that the level $K$ is of the form $K = K^p \times K_p$, where $K^p \subseteq G(\A_f^p)$ is a sufficiently small compact open and $K_p \subseteq G(\Q_p)$ is \emph{hyperspecial}. This means that $K_p = \mathcal{G}(\Z_p)$, where $\mathcal{G}$ is a reductive $\Z_p$-model of $G_{\Q_p}$. There is a $\Z_p$-lattice $V^+ \subseteq V_{\Q_p}$ such that the natural inclusion $G \hookrightarrow \widetilde G \subseteq \GL(V)$ extends to an embedding $\mathcal{G} \subseteq \GL(V^+)$. We may also assume that the pairing $\Psi$ extends to a perfect pairing
\[
\Psi \colon V^+ \times V^+ \to \Z_p.
\]
In particular we get an embedding $\mathcal{G} \subseteq \GSp(V^+)$. We fix once and for all such a lattice and such an embedding. This choice gives, as above, the integral model $\widetilde{\mathcal{S}}_{\widetilde K}$ over $\Z_p$. We define $\mathcal{S}_K$ to be the normalization of $S_K$ inside $\widetilde{\mathcal{S}}_{\widetilde K} \otimes \mathcal{O}_E$. By the main result of \cite{kisin}, we have that $\mathcal{S}_K$ is smooth and it is a \emph{canonical integral model} of $S_K$ (in the sense that is satisfies Milne's extension property). We write $\widetilde{\mathcal{A}} \to \widetilde{\mathcal{S}}_K$ for the universal abelian scheme. Its generic fiber will be denoted $A \to \widetilde S_K$. The pullback of $\widetilde{\mathcal{A}}$ and $\widetilde A$ to $\mathcal{S}_K$ and $S_K$ will be denoted by $\mathcal{A}$ and $A$.
\begin{ass} \label{ass: ord dense}
We assume that the ordinary locus of the reduction of $\mathcal{S}_K$ is dense.
\end{ass}
We consider the level $K$ fixed from now on, and we erase it from the notation, writing $\mathcal{S}$ for our Shimura variety. We write $S^{\ad} \to \Spa(\Q_p, \Z_p)$ for the adic space associated to $(S,\mathcal{S})$ (so $\mathcal{O}_{S^{\ad}}^+ = \mathcal{O}_{\mathcal{S}}$), and similarly for $\widetilde S^{\ad}$.

In the following, we will need to work over $K$, a finite extension of $\Q_p$ (no confusion should arise since we will not need the level anymore). We will assume without any comment that $K$ is large enough and we base change all our objects to $K$, without any explicit notation. Indeed, $K$ will not play any significant role in our theory.
\subsection{Parabolic subgroups} Let $\widetilde Q$ be the Siegel parabolic of $\widetilde G$ consisting of matrices whose lower left $g\times g$ block is $0$ and we write $\widetilde M$ for its Levi subgroup. We fix once and for all the embedding $\GL_g \hookrightarrow \widetilde G$ given by $a \mapsto \begin{pmatrix} a & 0 \\ 0 & ^ta^{-1} \end{pmatrix}$. Writing $\widetilde G^\circ$ for $\mathrm{Sp}_{2g}/\Q$, and similarly for the other groups, the previous embedding gives an isomorphism $\GL_g \stackrel{\sim}{\longrightarrow} \widetilde M^\circ$. We can attach to any representation $\rho$ of $\widetilde Q$ an automorphic sheaf $\widetilde \E(\rho)$ on $\widetilde S$. The parabolic $\widetilde Q$ has a canonical extension over $\Z$ to $\widetilde{\mathcal{Q}} \subseteq \GSp_{2g}/\Z$, and similarly for $\widetilde{\mathcal{M}}$ etc. If $\rho$ extends to a representation of $\widetilde{\mathcal{Q}}$ we have a canonical integral model $\widetilde \E(\rho)^+$ of $\widetilde \E(\rho)$. We consider $\widetilde \E(\rho)$, resp. $\widetilde \E(\rho)^+$, as a sheaf of $\mathcal{O}_{\widetilde S^{\ad}}$-modules, resp. $\mathcal{O}_{\widetilde S^{\ad}}^+$-modules, on $\widetilde S^{\ad}$. Note that $\widetilde{\mathcal{E}}(\rho)$ comes equipped with a filtration.

We set $Q \colonequals \widetilde Q \cap G$, $M \colonequals \widetilde M \cap G$ and similarly for $Q^\circ$ and the other groups. We have that $Q$ extends to a parabolic subgroup $\mathcal{Q} \subseteq \mathcal{G}$ and we use the obvious notation for its Levi and derived subgroup. As above, if $\rho$ is a representation of $Q$, we have an automorphic sheaf $\E(\rho)$ on $S$. If $\rho$ is the restriction of a $\widetilde Q$-representation, then $\E(\rho)$ is the pullback of $\widetilde{\mathcal{E}}(\rho)$ via $S \hookrightarrow \widetilde S$. As above, we consider $\E(\rho)$ as a sheaf of $\mathcal{O}_{S^{\ad}}$-modules on $S^{\ad}$. Again $\mathcal{E}(\rho)$ is naturally equipped with a filtration. If $\rho$ extends to a $\mathcal{Q}$-representation, then, by \cite{lovering}, we have a canonical integral model $\mathcal{E}(\rho)^+$ of $\mathcal{E}(\rho)$.
\subsection{Borel and unipotent subgroups}
Recall that we are working over $K$, a sufficiently big finite extension of $\Q_p$, in particular we can assume that $G$ is split over $K$. We let $B$ be a fixed Borel subgroup of $M^\circ$ and we write $T$ and $U$ for the maximal torus and the unipotent part of $B$, so $B = TU$. We denote by $\widetilde B $ a Borel of $\widetilde M^\circ \cong \GL_g$ such that $B = \widetilde B \cap G$. We write $\widetilde T$ and $\widetilde U$ for the torus and unipotent part of $\widetilde B$.

We say that a weight of $T$ is dominant if it is dominant with respect to $B$ and we denote by $X^\ast(T)^+$ the cone of dominant weights. We write $\kappa' = -w_0 \kappa$, where $w_0$ is the longest element of the Weyl group of $M^\circ$. The involution $\kappa \mapsto \kappa'$ of $X^\ast(T)$ respects the cone $X^\ast(T)^+$. If $\kappa \in X^\ast(T)^+$, we define the space
\[
V_{\kappa} \colonequals \{f \colon M^\circ \to \mathbf{A}^1 \mbox{ such that } f(gb) = \kappa(b)f(g) \mbox{ for all } (g,b) \in M^\circ \times B \},
\]
where by definition $\kappa$ is trivial on $U$. We let $M^\circ$ act on $V_{\kappa}$ via $(g \star f)(x) = f(g^{-1}x)$. Then $V_{\kappa}$ is a finite dimensional $K$-vector space and it is the irreducible $M^\circ$-representation of highest weight $\kappa' $.
\subsection{Betti and de Rham realizations} By general representation theory, there exists $W \subseteq V^\otimes_{\Q_p}$ such that
\[
G = \{g \in \widetilde G \mbox{ such that } g(W) \subseteq W \}.
\]
More precisely, by \cite[Proposition 1.3.2]{kisin}, there exist a $\Z_p$-lattice $W^+ \subseteq (V^+)^\otimes$ that is a direct summand and such that
\[
\mathcal{G} = \{g \in \GSp(V^+) \mbox{ such that } g(W^+) \subseteq W^+ \}.
\]
We fix such $W$ and $W^+$ once and for all. By \cite{kisin}, there exists a $p$-adic Hodge cocharacter $w\colon \mathbf{G}_{\mathrm{m}} \to \mathcal{G}$ defined over  $\Z_p$ that factors through $\mathcal{Q}$ and defines $\mathcal{Q}$: it is the parabolic subgroup of $\mathcal{G}$ such that its Lie algebra $\mathrm{Lie} \, \mathcal{Q}$ is the $\Z_p$-submodule of the Lie algebra $\mathrm{Lie} \, \mathcal{G}$  of $\mathcal{G}$ on which the adjoint action of $\mathbf{G}_{\mathrm{m}}$ given by $w$ has non-negative weights.  The cocharacter $w$ of $\mathcal{G}$, and of $\widetilde{\mathcal{G}}$ via the inclusion $\mathcal{G}\subset \widetilde{\mathcal{G}}$, define a grading on $V^+$ and $W^+$. We will be interested in the associated graded objects
\[
\Theta \colonequals \Gr^\bullet(W) \mbox{ and } \Theta^+ \colonequals \Gr^\bullet(W^+).
\]
Since the filtrations on $V_{\Q_p}$ and $V^+$ have only two nontrivial steps, that are moreover one dual to the other, we have 
\[
\Theta \subseteq \Sigma^\otimes \mbox{ and } \Theta^+ \subseteq (\Sigma^+)^\otimes.
\]
As the gradings are defined by the cocharacter $w$,  we see that the inclusions above are strict, i.e., the grading on $\Theta$ and $\Theta^+$ are those induced by the grading on $\Sigma^\otimes$ and $(\Sigma^+)^\otimes$ respectively. We fix once and for all an isomorphism $\Sigma^+ \cong \Z_p^g$, so there is an integer $t \in \N$ such that
\[
\Theta^+ \cong \Z_p^t \subseteq \bigoplus_{a,b \in \N} (\Z_p^g){\otimes a} \otimes (\Z_p^g)^{\otimes b}
\]
as a direct summand using our conventions. Since the parabolic subgroup $\mathcal{Q}$ is normalized by the cocharacter $w$, its Levi subgroup $\widetilde{\mathcal{M}}$ acts on $\Sigma^+$ and
\[
\mathcal{M}^\circ = \{g \in \widetilde{\mathcal{M}}^\circ \mbox{ such that } g(\Theta^+) \subseteq \Theta^+ \},
\]
where $\mathcal{M}^\circ$ and $\widetilde{\mathcal{M}}^\circ$ have the obvious definitions. We now move on to de Rham realizations. We have $\mathcal{E}(V^+)^+ = \Homol^1_{\dR}(\mathcal{A})$ and we define
\[
\Sigma_{\dR}^+ \colonequals \Fil^1(\mathcal{E}(V^+)^+) = \omega^+,
\]
where $\omega^+$ is the conormal sheaf of $\mathcal{A}$. It is a locally free sheaf of $\mathcal{O}_{S^{\ad}}^+$-modules. We also define
\[
\Theta^+_{\dR} \colonequals \Gr^\bullet(\mathcal{E}(W^+)^+).
\]
It is a locally free sheaf of $\mathcal{O}_{S^{\ad}}^+$-modules of rank $t$ and, since $\mathcal{E}(\cdot)^+$ is an exact tensor functor, we have
\[
\Theta^+_{\dR} \subseteq (\Sigma^+_{\dR})^\otimes
\]
as a direct summand. We extend the notation to objects in characteristic $0$ in the obvious way.
\subsection{Classical modular forms} Note that if $\imath \colon \Sigma \otimes \mathcal{O}_{X^{\ad}} \stackrel{\sim}{\longrightarrow} \Sigma_{\dR}$ is an isomorphism then, using the polarization on $A$, we obtain an isomorphism $\Sigma^\otimes \otimes \mathcal{O}_{X^{\ad}} \stackrel{\sim}{\longrightarrow} \Sigma_{\dR}^\otimes$. In particular, we consider the space
\[
\mathcal{T} \colonequals \{ \imath \in \underline{\mathrm{Isom}}(\Sigma \otimes \mathcal{O}_{X^{\ad}}, \Sigma_{\dR}) \mbox{ such that } \imath(\Theta \otimes \mathcal{O}_{X^{\ad}}) \subseteq \Theta_{\dR}  \}.
\]
We define a left action $M^\circ \times \mathcal{T} \to \mathcal{T}$ by sending an isomorphism $\imath$ to $\imath \circ g^{-1}$ for all $g \in M^\circ$. By definition $f \colon \mathcal{T} \to S^{\ad}$ is an $M^\circ$-torsor. We also have the $\widetilde M^\circ = \GL_g$-torsor $\widetilde f \colon \widetilde{\mathcal{T}} \to \widetilde S^{\ad}$ defined as $\mathcal{T}$ but without the condition on $\Theta$.
\begin{defi}
Let $\kappa \in X^\ast(T)^+$ be a dominant weight. We define the sheaf $\omega^\kappa$ on $S^{\ad}$ by
\[
\omega^\kappa \colonequals f_\ast \mathcal{O_{\mathcal{T}}}[\kappa],
\]
meaning the sheaf of functions that are $\kappa$-equivariant for the action of $B$.
\end{defi}
Locally for the étale topology on $S^{\ad}$, we have an isomorphism
\[
\omega^\kappa \cong V_{\kappa}.
\]

\section{\texorpdfstring{$p$-adic theory}{p-adic theory}} \label{sec: p-adic}
In this section we want to put the sheaves $\omega^\kappa$ in $p$-adic families.
\subsection{Canonical subgroups and modifications} \label{subsec: modifications} Over $\mathcal{S}_{\F_p}$ we have the \emph{Hasse invariant}, that is a modular form of weight $p-1$. There is a continuous function
\[
\Hdg \colon S^{\ad} \to [0,1]
\]
defined taking the truncated valuation of any lift of the Hasse invariant. We define
\[
S(v)^{\ad} \colonequals \Hdg^{-1}([0,v]).
\]
In particular $S(0)^{\ad}$, that it is dense by Assumption~\ref{ass: ord dense}, is the inverse image of the ordinary locus under the specialization map and $S(v)^{\ad}$ is a strict neighborhood of $S(0)^{\ad}$. Let $n \geq 1$ be an integer. By \cite{fargues_can}, if $v < \frac{1}{p^{n+1}}$, the universal $p$-divisible group $A[p^\infty]$ over $S(v)^{\ad}$ as a canonical subgroup $H_n \subseteq A[p^n]$ of level $n$. We write $\widetilde\Ig_n(v) \to S(v)^{\ad}$ for the étale cover classifying trivializations $(\Z/p^n\Z)^g \cong H_n^{\D}$ of the Cartier dual of $H_n$. Over $\widetilde\Ig_n(v)$ we have
\begin{itemize}
 \item the $\mathcal{O}_{\widetilde\Ig_n(v)}$-module $\omega$ defined as the conormal sheaf of the universal abelian variety $A \to \widetilde\Ig_n(v)$;
 \item the $\mathcal{O}_{\widetilde\Ig_n(v)}^+$-module $\omega^+$, that is an integral structure of $\omega$, defined, over an open affinoid $\Spa(B,B^+) \subseteq \widetilde\Ig_n(v)$, as the conormal sheaf of $A^+$, where $A^+$ is an extension to $B^+$ of the pullback of $A$;
 \item the $\mathcal{O}_{\widetilde\Ig_n(v)}^+$-module $\omega_{H_n}^+$ defined similarly to $\omega^+$.
\end{itemize}
The \emph{Hodge-Tate map} is a morphism $\HT \colon \T_p(A) \to \omega^+$. We also have $\HT_{H_n} \colon H_n^{\D} \to \omega_{H_n}^+$. We let $\omega^\sharp \subseteq \omega^+$ be the inverse image via $\omega^+ \to \omega_{H_n}^+$ of $\HT_{H_n}(H_n^{\D} \otimes \mathcal{O}_{\widetilde\Ig_n(v)}^+)$. By \cite{AIP}, we have that $\omega^\sharp$ does not depend on $n$ and it is a locally free sheaf of rank $g$ such that $\omega^\sharp \otimes \mathcal{O}_{\widetilde\Ig_n(v)} \cong \omega$ and moreover, if $w$ is a rational number such that $0 < w \leq n - v\frac{p^n}{p-1}$, then $\HT$ induces an isomorphism of $\mathcal{O}_{\widetilde\Ig_n(v)}^+/p^w$-modules
\[
\HT_w \colon H_n^{\D} \otimes \mathcal{O}_{\widetilde\Ig_n(v)}^+/p^w \stackrel{\sim}{\longrightarrow} \omega^\sharp/p^w.
\]
We call $\omega^\sharp$ a \emph{modification} of $\omega^+$ and we set
\[
\Sigma^\sharp_{\dR} \colonequals \omega^\sharp.
\]
Using the image via $\HT_w$ of the canonical basis of $H_n^{\D}$ that exists over $\widetilde{\Ig}_n(v)$, we see that $\Sigma^\sharp_{\dR}/p^w$ is equipped with a set of \emph{marked sections}, namely sections of $\Sigma^\sharp_{\dR}/p^w$ that are locally a basis. Our goal in this section is to define a modification $\Theta^\sharp_{\dR}$ of $\Theta^+_{\dR}$ and to show that $\Theta^\sharp_{\dR}/p^w$ is equipped with a set of marked sections.

We write $\Sigma_{\et,n}^+$ for $H_n^{\D}$, so $\HT_w$ is a map $\Sigma_{\et,n}^+ \to \Sigma_{\dR}^\sharp/p^w$ that induces an isomorphism
\[
\HT_w \colon \Sigma_{\et,n}^+ \otimes \mathcal{O}_{\widetilde\Ig_n(v)}^+/p^w \stackrel{\sim}{\longrightarrow} \Sigma_{\dR}^\sharp/p^w.
\]
The dual of $\HT_w^{-1} \colon \Sigma_{\dR}^\sharp/p^w \to \Sigma_{\et,n}^+ \otimes \mathcal{O}_{\widetilde\Ig_n(v)}^+/p^w$ induces a morphism, denoted again $\HT_w$,
\[
\HT_w \colon \left(\Sigma^+_{\et,n}\right)^\otimes \to \left(\Sigma^+_{\dR}/p^w \right)^\otimes.
\]
Recall that over $\widetilde{\Ig}_n(v)$ we have the identification $\Sigma^+_{\et,n} = \Sigma^+/p^n$. We define $\Theta^+_{\et,n}$ as the submodule $\Theta^+_{\et,n} \subseteq (\Sigma^+_{\et,n})^\otimes$ making the following diagram cartesian
\[
\begin{tikzcd}
\Theta^+_{\et,n} \arrow[hook]{r} \ar{d}{\wr} & \left(\Sigma^+_{\et,n}\right)^\otimes \ar[equal]{d} \\
\Theta^+/p^n \arrow[hook]{r} & \left(\Sigma^+/p^n\right)^\otimes
\end{tikzcd}
\]
Clearly $\Theta^+_{\et,n} \subseteq (\Sigma^+_{\et,n})^\otimes$ is a direct summand and moreover we have a morphism $\HT_w \colon \Theta^+_{\et,n} \to \left(\Sigma^+_{\dR}/p^w \right)^\otimes$. By the properties of the functor $\mathcal{E}(\cdot)^+$, the image of $\Theta^+$ via $\HT$ is inside $\Theta^+_{\dR}$, giving the morphism
\[
\HT_w \colon \Theta^+_{\et,n} \to \Theta^+_{\dR}/p^w.
\]
\begin{defi}
We define
\[
\Theta^\sharp_{\dR} \colonequals \Theta_{\dR} \bigcap \left( \Sigma_{\dR}^\sharp \right)^\otimes.
\]
\end{defi}
Recall that $\Theta^+/p^n \cong (\Z/p^n\Z)^t$, so as abstract groups $(\Z/p^n\Z)^t$ and $\Theta^+_{\et,n}$ are isomorphic. We write $\Ig_n(v) \to \widetilde{\Ig}_n(v)$ for the étale cover classifying trivializations $(\Z/p^n\Z)^t \cong \Theta^+_{\et,n}$. In particular over $\Ig_n(v)$ we have the universal isomorphism $\Psi \colon (\Z/p^n\Z)^t \stackrel{\sim}{\longrightarrow} \Theta^+_{\et,n}$ and the sections $\theta_i \colonequals \HT_w(\Psi(f_i))$, where $\{f_1,\ldots,f_t\}$ is the canonical basis of $(\Z/p^n\Z)^t$. Note that these are a priori sections of $(\Sigma^+_{\dR}/p^w )^\otimes$. From now on we will work over $\Ig_n(v)$ and we are going to prove that $\Theta^\sharp_{\dR}$ is a locally free sheaf of $\mathcal{O}_{\Ig_n(v)}^+$-modules included as a locally direct summand in $(\Sigma^+_{\dR})^\otimes$ and that $\{\theta_i\}$ is a set of marked sections of $\Theta^\sharp_{\dR}/p^w$.

We work locally over a fixed affinoid $\Spa(R,R^+) \subseteq \Ig_n(v)$ such that $\Sigma^\sharp_{\dR}$ is free. We also assume that $\Theta^+_{\dR}$ is a free $R^+$-module, of rank $t$. Let $\Ha \in R^+$ by any fixed lifting of the Hasse invariant, so $\Ha$ divides $p^v$ in $R^+$.
\begin{lemma}
The natural morphism $\Theta^\sharp_{\dR} / p^w \to (\Sigma^\sharp_{\dR})^\otimes / p^w$ is injective.
\end{lemma}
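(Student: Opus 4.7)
The statement amounts to showing that the submodule $\Theta^\sharp_{\dR} \subseteq (\Sigma^\sharp_{\dR})^\otimes$ is \emph{$p^w$-saturated}, i.e.\ that $p^w (\Sigma^\sharp_{\dR})^\otimes \cap \Theta^\sharp_{\dR} = p^w \Theta^\sharp_{\dR}$. The plan is to embed everything into the $R$-module $(\Sigma_{\dR})^\otimes$ and chase an element, exploiting the facts that $\Theta_{\dR}$ is an $R$-module and $p$ is invertible in $R$.

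First I would set up the ambient picture. By the local choice at the start of the subsection, $\Sigma^\sharp_{\dR}$ is free of rank $g$ over $R^+$, so each summand of $(\Sigma^\sharp_{\dR})^\otimes = \bigoplus_{a,b \in \N} (\Sigma^\sharp_{\dR})^{\otimes a} \otimes ((\Sigma^\sharp_{\dR})^\vee)^{\otimes b}$ is free over $R^+$. In particular $(\Sigma^\sharp_{\dR})^\otimes$ is $p$-torsion free, and the base change map $(\Sigma^\sharp_{\dR})^\otimes \to (\Sigma_{\dR})^\otimes$ is injective: on the tensor factors because an $R^+$-basis of $\Sigma^\sharp_{\dR}$ remains an $R$-basis of $\Sigma_{\dR}$, and on the dual factors because an $R^+$-linear functional on a free $R^+$-module embeds into its $R$-linear extension. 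Thus the intersection $\Theta^\sharp_{\dR} = \Theta_{\dR} \cap (\Sigma^\sharp_{\dR})^\otimes$ is unambiguously taken inside $(\Sigma_{\dR})^\otimes$.

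Next, I would prove injectivity directly. Let $x \in \Theta^\sharp_{\dR}$ and suppose $x \in p^w (\Sigma^\sharp_{\dR})^\otimes$, so that $x = p^w y$ for some $y \in (\Sigma^\sharp_{\dR})^\otimes$, unique by the $p$-torsion freeness just observed. Since $p$ is invertible in $R$ and $\Theta_{\dR}$ is an $R$-submodule of $(\Sigma_{\dR})^\otimes$ containing $x$, we may form $p^{-w} x$ inside $(\Sigma_{\dR})^\otimes$ and conclude that $y = p^{-w} x \in \Theta_{\dR}$. Combined with $y \in (\Sigma^\sharp_{\dR})^\otimes$, this gives $y \in \Theta_{\dR} \cap (\Sigma^\sharp_{\dR})^\otimes = \Theta^\sharp_{\dR}$, whence $x = p^w y \in p^w \Theta^\sharp_{\dR}$, as required.

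I do not foresee any serious obstacle: the argument is essentially tautological once one observes that $\Theta_{\dR}$ lives in characteristic zero while $\Theta^\sharp_{\dR}$ is cut out from it by a transverse intersection with an integral lattice. The only point that is not completely immediate is the injectivity of $(\Sigma^\sharp_{\dR})^\otimes \hookrightarrow (\Sigma_{\dR})^\otimes$, because the $(\cdot)^\otimes$ construction involves $R^+$-duals; but this is handled by the local freeness of $\Sigma^\sharp_{\dR}$ that is available by the choice of affinoid.
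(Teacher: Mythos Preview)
Your proof is correct and follows essentially the same approach as the paper: both arguments show that if $x \in \Theta^\sharp_{\dR}$ satisfies $x = p^w y$ with $y \in (\Sigma^\sharp_{\dR})^\otimes$, then $y = p^{-w}x \in \Theta_{\dR}$ (using that $\Theta_{\dR}$ is an $R$-module and $p \in R^\times$), hence $y \in \Theta_{\dR} \cap (\Sigma^\sharp_{\dR})^\otimes = \Theta^\sharp_{\dR}$. Your version is in fact slightly cleaner, as the paper introduces an auxiliary integer $k$ with $p^k\widetilde\theta \in \Theta_{\dR}^+$ that plays no real role, and you make explicit the (easy but worth noting) point that $(\Sigma^\sharp_{\dR})^\otimes \hookrightarrow (\Sigma_{\dR})^\otimes$ is injective thanks to the local freeness of $\Sigma^\sharp_{\dR}$.
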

\begin{proof}
Let $\theta$ an element in $\Theta^\sharp_{\dR} / p^w$ and let $\widetilde \theta \in \Theta^\sharp_{\dR}$ be a lifting of $\theta$. Let $k \in \N$ such that $p^k \widetilde \theta \in \Theta_{\dR}^+$. If $\widetilde \theta = p^w \widetilde\sigma$ for some $\widetilde\sigma \in (\Sigma^\sharp_{\dR})^\otimes$, then $p^k\widetilde \theta = p^{k+w}\widetilde\sigma$ so $\widetilde\sigma = p^{-w}\widetilde \theta \in \Theta_{\dR}$. It follows that $\widetilde \sigma \in \Theta_{\dR} \cap (\Sigma^\sharp_{\dR})^\otimes$ and then $\theta = 0$.
\end{proof}
\begin{lemma} \label{lemma: ord loc}
Over $R^+[\Ha^{-1}]$, we have $\Theta^\sharp_{\dR}[\Ha^{-1}] = \Theta_{\dR}^+[\Ha^{-1}]$.
\end{lemma}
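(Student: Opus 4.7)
The plan is to reduce the claim for $\Theta$ to the analogous (and essentially classical) statement for $\Sigma$, namely that $\Sigma^\sharp_{\dR}[\Ha^{-1}] = \Sigma^+_{\dR}[\Ha^{-1}]$, and then to exploit the definition $\Theta^\sharp_{\dR} = \Theta_{\dR} \cap (\Sigma^\sharp_{\dR})^\otimes$ together with the fact that $\Theta^+_{\dR}$ sits in $(\Sigma^+_{\dR})^\otimes$ as a direct summand.

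I would begin by verifying that $\Sigma^\sharp_{\dR}$ coincides with $\Sigma^+_{\dR}$ after inverting $\Ha$. On the ordinary locus the canonical subgroup $H_n$ is the (multiplicative) connected part $A[p^n]^\circ$, so its Cartier dual $H_n^{\D}$ is étale and the Hodge--Tate morphism induces an isomorphism $H_n^{\D} \otimes \mathcal{O}^+_{\widetilde{\Ig}_n(v)}[\Ha^{-1}] \xrightarrow{\sim} \omega^+_{H_n}[\Ha^{-1}]$; moreover the natural map $\omega^+ \to \omega^+_{H_n}$ identifies, after inverting $\Ha$, with the surjection $\omega^+ \twoheadrightarrow \omega^+/p^n$. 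Hence the preimage of $\HT_{H_n}(H_n^{\D}\otimes \mathcal{O}^+)$ in $\omega^+$ is the whole of $\omega^+[\Ha^{-1}]$, which gives $\omega^\sharp[\Ha^{-1}] = \omega^+[\Ha^{-1}]$, i.e., $\Sigma^\sharp_{\dR}[\Ha^{-1}] = \Sigma^+_{\dR}[\Ha^{-1}]$. This step is essentially recorded in \cite{AIP}.

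Next, since tensor products and duals commute with the flat localization $R^+ \to R^+[\Ha^{-1}]$, the equality of the previous step upgrades to $(\Sigma^\sharp_{\dR})^\otimes[\Ha^{-1}] = (\Sigma^+_{\dR})^\otimes[\Ha^{-1}]$, both viewed as submodules of $\Sigma_{\dR}^\otimes$. Applying the definition of $\Theta^\sharp_{\dR}$ and the flatness of the localization, I obtain
\[
\Theta^\sharp_{\dR}[\Ha^{-1}] = \Theta_{\dR}[\Ha^{-1}] \cap (\Sigma^\sharp_{\dR})^\otimes[\Ha^{-1}] = \Theta_{\dR}[\Ha^{-1}] \cap (\Sigma^+_{\dR})^\otimes[\Ha^{-1}].
\]

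It remains to identify the right-hand side with $\Theta^+_{\dR}[\Ha^{-1}]$. For this I use that $\Theta^+_{\dR} \subseteq (\Sigma^+_{\dR})^\otimes$ is a direct summand (inherited from the analogous property of $\Theta^+ \subseteq (\Sigma^+)^\otimes$ and the exactness of $\mathcal{E}(\cdot)^+$), so the quotient $(\Sigma^+_{\dR})^\otimes/\Theta^+_{\dR}$ is locally free, hence $p$-torsion free. Combined with $\Theta_{\dR} = \Theta^+_{\dR}[1/p]$, this yields $\Theta_{\dR} \cap (\Sigma^+_{\dR})^\otimes = \Theta^+_{\dR}$ inside $\Sigma_{\dR}^\otimes$; inverting $\Ha$ (a flat operation, which therefore preserves this intersection) gives the desired equality. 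The only non-formal input is the first step about the ordinary locus; all subsequent manipulations are bookkeeping with saturated submodules and flat localizations.
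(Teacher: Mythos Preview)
Your argument is correct and follows essentially the same route as the paper: both start from $\omega^\sharp = \omega^+$ over $R^+[\Ha^{-1}]$, pass to tensor powers, and then conclude by using that $\Theta^+_{\dR}$ is a direct summand of $(\Sigma^+_{\dR})^\otimes$ (equivalently, saturated for $p$). Your write-up simply unpacks the first and last steps in more detail than the paper does.
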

\begin{proof}
Over $R^+[\Ha^{-1}]$ we have $\omega^+ = \omega^\sharp$, so $(\Sigma^\sharp_{\dR})^\otimes[\Ha^{-1}] = (\Sigma_{\dR}^+)^\otimes[\Ha^{-1}]$ and then $\Theta^\sharp_{\dR}[\Ha^{-1}] = \Theta_{\dR} \cap (\Sigma_{\dR}^+)^\otimes[\Ha^{-1}] \supseteq \Theta^+_{\dR}[\Ha^{-1}]$. Equality follows since $\Theta^+_{\dR} \subseteq \Theta_{\dR} \cap (\Sigma_{\dR}^+)^\otimes$ is a direct summand.
\end{proof}
Let $S\subseteq \N^2$ be a finite set such that $\Theta^+ \subseteq \Phi \colonequals \bigoplus_{(a,b) \in S} \Z_p^{\otimes a} \otimes \Z_p^{\otimes b}$. We set
\[
\Phi_{\et,n} \colonequals \bigoplus_{(a,b) \in S} \Sigma_{\et,n}^{\otimes a} \otimes \Sigma_{\et,n}^{\vee\otimes b} \mbox{ and } \Phi_{\dR} \colonequals \bigoplus_{(a,b) \in S} (\omega^+)^{\otimes a} \otimes (\omega^{+\vee})^{\otimes b}.
\]
We write $k$ for the rank of $\Phi$ as $\Z_p$-module. Recall that we have a universal basis $f_1,\ldots f_t$ of $\Theta^+_{\et,n}$. Let us complete it to a basis $f_1,\ldots f_k$ of $\Phi_{\et,n}$ (this is possible since $\Theta^+_{\et,n} \subseteq \Phi_{\et,n}$ is a direct summand). Let $\sigma_i \colonequals \HT_w(f_i) \in (\Sigma^\sharp_{\dR})^\otimes/p^w$ for $i = 1,\ldots,k$, so by definition $\sigma_i = \theta_i \in \Theta^+_{\dR}/p^w$ if $i \leq t$ and $\{\sigma_i\}_{i=1}^k$ is a basis of $\Phi_{\dR}/p^w$. For all $i$, we fix $\widetilde \sigma_i \in \Phi_{\dR}$ a lifting of $\sigma_i$.
\begin{lemma} \label{lemma: base Phi}
We have that $\{\widetilde \sigma_i\}_{i=1}^k$ is a basis of $\Phi_{\dR}$.
\end{lemma}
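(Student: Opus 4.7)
The plan is to read the claim as a Nakayama/Neumann-series lifting result: the hypothesis tells us that the reductions $\sigma_i = \widetilde{\sigma}_i \bmod p^w$ form an $R^+/p^w$-basis of $\Phi_{\dR}/p^w$, and we want to conclude that the $\widetilde{\sigma}_i$ themselves form an $R^+$-basis of $\Phi_{\dR}$. Since $\Phi_{\dR}$ is by assumption free of rank $k$ over $R^+$ and we have exactly $k$ elements $\widetilde{\sigma}_i$, this is reduced to the invertibility of a single $k\times k$ matrix.

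Concretely, I would fix an $R^+$-basis of $\Phi_{\dR}$ and let $M \in \mathrm{M}_k(R^+)$ be the matrix whose columns are the coordinates of the $\widetilde{\sigma}_i$ in that basis. The hypothesis that $\{\sigma_i\}_{i=1}^k$ is a basis of $\Phi_{\dR}/p^w$ is equivalent to $\overline{M} = M \bmod p^w$ being invertible in $\mathrm{M}_k(R^+/p^w)$, equivalently to $\det(M)$ being a unit in $R^+/p^w$, and \emph{a fortiori} a unit in $R^+/p$. The whole problem therefore reduces to the elementary statement that an element of $R^+$ which is a unit modulo $p$ is already a unit in $R^+$.

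This last point is standard in the setup of the paper: the ring $R^+$ arising from an affinoid $\Spa(R,R^+)$ of the adic spaces under consideration is $p$-adically complete. Choosing $v \in R^+$ with $\det(M)\cdot v = 1 + pr$ for some $r \in R^+$, the element $1+pr$ is invertible via the convergent Neumann series $\sum_{n \geq 0}(-pr)^n \in R^+$, hence $\det(M) \in (R^+)^\times$, the matrix $M$ is invertible, and $\{\widetilde{\sigma}_i\}$ is a basis of $\Phi_{\dR}$. There is no real obstacle: the argument is formal, and the only non-trivial ingredient beyond the hypothesis is the $p$-adic completeness of $R^+$, which is built into the framework.
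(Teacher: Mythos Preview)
Your proof is correct and is essentially the same Nakayama-type argument as the paper's. The paper phrases it as ``topological Nakayama gives spanning; then a relation with some $\lambda_{i_0}\notin p^w R^+$ would contradict the basis property modulo $p^w$,'' while you package both halves into the single observation that the change-of-basis matrix has unit determinant modulo $p$ and hence is invertible by $p$-adic completeness; these are two standard formulations of the same lifting principle.
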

\begin{proof}
We already know that $\{\sigma_i\}$ is a basis of $\Phi_{\dR}/p^w$ so, by the topological Nakayama's lemma, $\{\widetilde \sigma_i\}$ is a spanning set of $\Phi_{\dR}$. If $\sum_{i=1}^k \lambda_i \widetilde \sigma_i = 0$ where $\lambda_i \in R^+$ are not all zero, then we may assume that there exist $i_0$ such that $\lambda_{i_0} \not \in p^w R^+$. Reducing modulo $p^w$ the equality $\sum_{i=1}^k \lambda_i \widetilde \sigma_i = 0$ we get a contradiction, so $\{\widetilde \sigma_i\}$ is a basis of $\Phi_{\dR}$.
\end{proof}
Let $\widetilde \theta_i \colonequals \widetilde \sigma_i $ for $i \leq t$. We assume from now on that $\widetilde \theta_i \in \Theta^{\sharp}_{\dR}[\Ha^{-1}]$ (this is possible by Lemma~\ref{lemma: ord loc}).
\begin{lemma} \label{lemma: ord loc im}
Over $R^+[\Ha^{-1}]$, the image of $\Theta^+_{\et,n}$ via $\HT_w$ lies in $\Theta^\sharp_{\dR}[\Ha^{-1}]/p^w$. Moreover, $\Theta^\sharp_{\dR}[\Ha^{-1}]$ is a free $R^+[\Ha^{-1}]$-module with basis $\{\widetilde\theta_i\}_{i=1}^t$.
\end{lemma}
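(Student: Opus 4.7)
The plan is to treat the two assertions separately. For the first, the map $\HT_w\colon \Theta^+_{\et,n} \to \Theta^+_{\dR}/p^w$ has already been built, just before the definition of $\Theta^\sharp_{\dR}$, using the tensor-exactness of $\mathcal{E}(\cdot)^+$. Since Lemma~\ref{lemma: ord loc} gives $\Theta^+_{\dR}[\Ha^{-1}] = \Theta^\sharp_{\dR}[\Ha^{-1}]$, the image of $\HT_w$ automatically lands in $\Theta^\sharp_{\dR}[\Ha^{-1}]/p^w$ after inverting $\Ha$, so there is nothing more to check.

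For the freeness, I would first record that $W^+ \subseteq (V^+)^\otimes$ is a direct summand of $\Z_p$-modules by construction, and hence, by tensor-exactness of $\mathcal{E}(\cdot)^+$, that $\Theta^+_{\dR}$ is a direct summand of $\Phi_{\dR}$ over $R^+$. Fix a complement $C$, a basis $\{e_1, \ldots, e_t\}$ of $\Theta^+_{\dR}$, and a basis $\{c_1, \ldots, c_{k-t}\}$ of $C$, so that $\{e_j\} \cup \{c_l\}$ is an $R^+$-basis of $\Phi_{\dR}$. By the standing assumption combined with Lemma~\ref{lemma: ord loc}, for $i \leq t$ we have $\widetilde\theta_i = \widetilde\sigma_i \in \Phi_{\dR} \cap \Theta^+_{\dR}[\Ha^{-1}]$; the direct summand structure, together with the fact that $\Ha$ is a non-zero-divisor on $R^+$, implies that this intersection equals $\Theta^+_{\dR}$, so $\widetilde\theta_i \in \Theta^+_{\dR}$.

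Next I would express each $\widetilde\sigma_i$ in the basis $\{e_j\} \cup \{c_l\}$ of $\Phi_{\dR}$: for $i \leq t$ the $c_l$-coordinates vanish, so the change of basis matrix $M \in \GL_k(R^+)$ (invertible by Lemma~\ref{lemma: base Phi}) is block upper triangular with zero bottom-left $(k-t) \times t$ block. Both diagonal blocks of $M$ are then invertible; the top-left $t \times t$ block expresses $\{\widetilde\theta_i\}_{i=1}^t$ in the basis $\{e_j\}$ of $\Theta^+_{\dR}$, so $\{\widetilde\theta_i\}_{i=1}^t$ is an $R^+$-basis of $\Theta^+_{\dR}$, hence an $R^+[\Ha^{-1}]$-basis of $\Theta^\sharp_{\dR}[\Ha^{-1}]$ after localizing at $\Ha$.

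The main obstacle is the block triangular reduction itself, which depends crucially on $\Theta^+_{\dR}$ entering as a direct summand of $\Phi_{\dR}$ (inherited from the analogous property of $W^+ \subseteq (V^+)^\otimes$), not merely a subobject. Without this, the fact that $\{\widetilde\sigma_i\}_{i=1}^k$ form a basis of $\Phi_{\dR}$ together with $\widetilde\theta_i \in \Theta^+_{\dR}[\Ha^{-1}]$ would only give linear independence of the $\widetilde\theta_i$, which is not enough to conclude that they span $\Theta^\sharp_{\dR}[\Ha^{-1}]$.
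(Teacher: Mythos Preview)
Your proof is correct and in fact establishes slightly more than the lemma asks: you show that $\{\widetilde\theta_i\}_{i=1}^t$ is already an $R^+$-basis of $\Theta^+_{\dR}$ before inverting $\Ha$. The route, however, differs from the paper's. The paper argues directly over $R^+[\Ha^{-1}]$: letting $N$ be the $R^+[\Ha^{-1}]$-span of the $\widetilde\theta_i$ inside $\Theta^\sharp_{\dR}[\Ha^{-1}]$, both $N$ and $\Theta^\sharp_{\dR}[\Ha^{-1}]$ are free of rank $t$, so the quotient is torsion; but this quotient also injects into $\Phi_{\dR}[\Ha^{-1}]/N \cong R^+[\Ha^{-1}]^{k-t}$, which is torsion-free, forcing the quotient to vanish. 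Your block-triangular change-of-basis argument is more elementary (no rank/torsion reasoning) and exploits the direct-summand property of $\Theta^+_{\dR}\subseteq\Phi_{\dR}$ more explicitly, together with the fact that $\Ha$ is a non-zerodivisor on $R^+$ to pull the $\widetilde\theta_i$ back into $\Theta^+_{\dR}$ itself. A small quibble: the relevant direct-summand inclusion is $\Theta^+\subseteq(\Sigma^+)^\otimes$ (hence $\Theta^+\subseteq\Phi$), not $W^+\subseteq(V^+)^\otimes$; but this is exactly what the paper records just before introducing $\Theta^+_{\dR}$, so your appeal to it is legitimate.
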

\begin{proof}
Since the image of $\HT_w$ is inside $\Theta^+_{\dR}/p^w$, the first statement is clear by Lemma~\ref{lemma: ord loc}. Let $N \subseteq \Theta^\sharp_{\dR}[\Ha^{-1}]$ be the submodule generated be the $\widetilde \sigma_i$'s, for $i=1,\ldots, t$. Then $N$ and $\Theta^\sharp_{\dR}[\Ha^{-1}]$ are both free modules of rank $t$, so the quotient $\Theta^\sharp_{\dR}[\Ha^{-1}]/N$ is a torsion module that is included in $\Phi_{\dR}[\Ha^{-1}]/\langle \widetilde \sigma_i \rangle_{i=t+1}^k \cong R^+[\Ha^{-1}]^{k-t}$, so $\Theta^\sharp_{\dR}[\Ha^{-1}]/N = 0$ and $\{\widetilde \theta_i\}$ is a basis of $\Theta^\sharp_{\dR}[\Ha^{-1}]$.
\end{proof}
Let $w_0 \colonequals n - v\frac{p^n}{p-1}$, so by assumption $w \leq w_0$ and all the above results hold true for $w=w_0$. We assume from now on that $w < w_0$.
\begin{lemma}
Up to shrinking $v$, we have $\HT_w(\Theta_{\et,n}^+) \subseteq \Theta^\sharp_{\dR}/p^w$.
\end{lemma}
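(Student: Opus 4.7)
The plan is to prove the statement on the basis level: since $\Theta^+_{\et,n}$ is locally free over $\mathcal{O}^+_{\Ig_n(v)}/p^n$ with basis $\{f_1,\ldots,f_t\}$, it suffices to show, after possibly shrinking $v$, that each section $\sigma_i := \HT_w(f_i) \in (\Sigma^\sharp_{\dR})^\otimes / p^w$ admits a lift to $\Theta^\sharp_{\dR}$. Recall we already know from the ``properties of $\mathcal{E}(\cdot)^+$'' discussion that $\sigma_i$ lifts to the element $\widetilde\theta_i \in \Theta^+_{\dR}$ fixed before Lemma~\ref{lemma: ord loc im}, so all that is missing is to upgrade this lift from $\Theta^+_{\dR}$ to $\Theta^\sharp_{\dR}$ modulo $p^w$.

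The first main input is Lemma~\ref{lemma: ord loc im}, which tells us that on our fixed affinoid $\Spa(R,R^+)$ the chosen $\widetilde\theta_i$ already lie in $\Theta^\sharp_{\dR}[\Ha^{-1}]$ and even form a basis of it. Hence there is an integer $N$, uniform in $i\le t$, with $\Ha^N\widetilde\theta_i \in \Theta^\sharp_{\dR}$. Combined with the divisibility $\Ha\mid p^v$ in $R^+$, this immediately yields
\[
p^{vN}\widetilde\theta_i \;\in\; \Theta^\sharp_{\dR} \qquad (i=1,\ldots,t).
\]

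The plan is now to exploit the freedom in choosing $v$. Since $w<w_0 = n - v\,p^n/(p-1)$, decreasing $v$ loosens the constraint on $w$; one shrinks $v$ to some $v'\le v$ small enough that $v'N$ is strictly smaller than $w$ (and small enough to dominate the quantitative obstructions coming from the denominators measuring the failure of $\Theta^+_{\dR}=\Theta^\sharp_{\dR}$ off the ordinary locus, as in Lemma~\ref{lemma: ord loc}). Over this smaller strict neighborhood, the relation $p^{v'N}\widetilde\theta_i\in \Theta^\sharp_{\dR}$ should be upgraded via an approximation/Nakayama-style argument to an honest congruence $\widetilde\theta_i \equiv z_i \pmod{p^w(\Sigma^\sharp_{\dR})^\otimes}$ with $z_i\in \Theta^\sharp_{\dR}$, which is precisely what is needed.

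The main obstacle is this last ``cancellation of $p^{v'N}$'' step: the quotient $Q := (\Sigma^\sharp_{\dR})^\otimes/\Theta^\sharp_{\dR}$ may carry $p$-power torsion supported on the non-ordinary locus, so that knowing $p^{v'N}\bar\tau_i=0$ in $Q/p^w$ does not formally imply $\bar\tau_i=0$. The whole point of the hypothesis ``up to shrinking $v$'' is that, by making $v'N$ arbitrarily small, one can push the exponent of this torsion below $w-v'N$ and conclude. In practice, this should be implemented by combining the quantitative content of Lemmas~\ref{lemma: ord loc} and~\ref{lemma: ord loc im} (which express the transition from $\Sigma^+_{\dR}$ to $\Sigma^\sharp_{\dR}$ in controlled powers of $\Ha$, hence of $p^v$) with the expression of $\widetilde\theta_i$ in terms of a local basis of $(\Sigma^\sharp_{\dR})^\otimes$.
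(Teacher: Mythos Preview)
Your plan has a genuine gap at the final ``cancellation'' step, and the strategy you sketch to close it does not work. You correctly identify the obstacle: the quotient $Q=(\Sigma^\sharp_{\dR})^\otimes/\Theta^\sharp_{\dR}$ may carry $p$-power torsion, so from $p^{v'N}\bar\tau_i=0$ in $Q$ one cannot conclude $\bar\tau_i\in p^wQ$. But your proposed remedy---shrinking $v$ so that $v'N<w$---does nothing to bound this torsion; restricting to a smaller strict neighborhood has no reason to kill or reduce $p$-torsion in $Q$. No Nakayama-type argument will bridge ``$p^{v'N}\widetilde\theta_i\in\Theta^\sharp_{\dR}$ with $v'N<w$'' to ``$\widetilde\theta_i\in\Theta^\sharp_{\dR}+p^w(\Sigma^\sharp_{\dR})^\otimes$'': these statements are simply unrelated.

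The step where you lose the thread is replacing $\Ha^N$ by $p^{vN}$. The crucial feature of $\Ha$, which you discard, is that it is a \emph{unit in $R$}; division by powers of $\Ha$ therefore makes sense in the $R$-module $\Theta_{\dR}$, while division by $p$ does not. The paper's argument exploits exactly this, together with the gap $w<w_0$, and does not try to push $\widetilde\theta_i$ itself into $\Theta^\sharp_{\dR}$ but rather constructs a \emph{new} lift. Working first modulo $p^{w_0}$, Lemma~\ref{lemma: ord loc im} gives $s$ with $\Ha^s\theta_i\in\Theta^\sharp_{\dR}/p^{w_0}$; lifting to $\widetilde\eta_i\in\Theta^\sharp_{\dR}$ one has $\widetilde\eta_i-\Ha^s\widetilde\sigma_i=p^{w_0}\widetilde\gamma_i$ in $(\Sigma^\sharp_{\dR})^\otimes$. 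One then shrinks $v$ so that $\Ha^s\mid p^{w_0-w}$ (note: this is the \emph{opposite} divisibility to yours), and sets $\widetilde\rho_i:=\widetilde\sigma_i+p^w\,(p^{w_0-w}/\Ha^s)\,\widetilde\gamma_i\in(\Sigma^\sharp_{\dR})^\otimes$, so that $\widetilde\eta_i=\Ha^s\widetilde\rho_i$ and $\widetilde\rho_i\equiv\widetilde\sigma_i\pmod{p^w}$. Since $\Ha\in R^\times$, one gets $\widetilde\rho_i=\Ha^{-s}\widetilde\eta_i\in\Theta_{\dR}$, hence $\widetilde\rho_i\in\Theta_{\dR}\cap(\Sigma^\sharp_{\dR})^\otimes=\Theta^\sharp_{\dR}$ by the very definition of $\Theta^\sharp_{\dR}$. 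The two ingredients absent from your plan are thus (i) the use of the slack $w_0-w$ to make $\Ha^s$ divide a power of $p$ integrally, and (ii) the observation that dividing an element of $\Theta^\sharp_{\dR}$ by a power of $\Ha$ stays in $\Theta_{\dR}$, so that membership in $\Theta^\sharp_{\dR}$ follows from its definition as an intersection.
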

\begin{proof}
By Lemma~\ref{lemma: ord loc im} there is $s \in \N$ such that $\Ha^s\theta_i \in \Theta^\sharp_{\dR}/p^{w_0}$ for $i = 1,\ldots, t$. Let $\widetilde \eta_i \in \Theta^\sharp_{\dR}$ be a lifting of $\Ha^s\theta_i$. Since $\sigma_i = \theta_i$ if $i \leq t$, there is $\widetilde\gamma_i \in (\Sigma^\sharp_{\dR})^\otimes$ such that
\[
\widetilde \eta_i - \Ha^s \widetilde\sigma_i = p^{w_0} \widetilde \gamma_i
\]
for $i = 1,\ldots, t$. Let $S^+$ be the quotient of $R^+\langle X\rangle/(p^{w_0-w}-X\Ha^s)$ by its $p$-torsion. In particular $\Spa(S^+_{\Q_p},S^+)$ is an open affinoid in $\Spa(R,R^+)$ and $\Ha^s$ divides $p^{w_0-w}$ in $S^+$. It is clear that $\Sigma^+_{\dR}$, $\Sigma^\sharp_{\dR}$, and $\Theta^+_{\dR}$ commute with base change and we have a morphism $\Theta^\sharp_{\dR} \otimes_{R^+} S^+ \to \Theta^\sharp_{\dR}(S^+)$, where $\Theta^\sharp_{\dR}(S^+)$ is defined in the same way as $\Theta^\sharp_{\dR}$, but over $S^+$. Using this morphism, all the above results hold true over $\Spa(S^+_{\Q_p},S^+)$ and in practice, up to shrinking $v$, we may assume that $\Ha^s$ divides $p^{w_0-w}$ in $R^+$. In particular we can write
\[
\widetilde \eta_i = \Ha^s \left( \widetilde\sigma_i + p^w\frac{p^{w_0 - w}}{\Ha^s} \widetilde  \gamma_i \right)
\]
so there is $\widetilde \rho_i \in (\Sigma^\sharp_{\dR})^\otimes$ such that
\[
\widetilde \eta_i = \Ha^s \widetilde\rho_i \mbox{ and } \widetilde \rho_i \equiv \widetilde \sigma_i \bmod{p^w}.
\]
We claim that $\widetilde \rho_i \in \Theta^\sharp_{\dR}$. Indeed $\widetilde \rho_i \in (\Sigma^\sharp_{\dR})^\otimes$ and $\widetilde \rho_i = \Ha^{-s} \widetilde \eta_i \in \Theta_{\dR}$ (since $\Ha$ is invertible in $R$) so $\widetilde \rho_i \in \Theta_{\dR} \cap (\Sigma^\sharp_{\dR})^\otimes = \Theta_{\dR}^\sharp$. In summary, for all $i=1,\ldots, t$ we have that $\widetilde \rho_i \in \Theta^\sharp_{\dR}$ is such that its reduction modulo $p^w$ is $\widetilde \sigma_i = \HT_w(f_i)$ and in particular $\HT_w(\Theta_{\et,n}^+) \subseteq \Theta^\sharp_{\dR}/p^w$.
\end{proof}
From now on we assume that $v$ is small enough so that $\HT_w(\Theta_{\et,n}^+) \subseteq \Theta^\sharp_{\dR}/p^w$. By the previous lemma we can assume that $\widetilde \theta_i \in \Theta_{\dR}^\sharp$.
\begin{coro}
The family $\{\theta_i\}_{i=1}^t$ is a basis of $\Theta_{\dR}^\sharp/p^w$.
\end{coro}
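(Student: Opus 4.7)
The plan is to show $\{\theta_i\}_{i=1}^t$ both spans and is linearly independent in $\Theta^\sharp_{\dR}/p^w$, transporting the question through the Hodge--Tate comparison to the étale side, where $\Theta^+_{\et,n}$ sits as a direct summand of $(\Sigma^+_{\et,n})^\otimes$ with an explicit basis.

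For linear independence, I would use the injection $\Theta^\sharp_{\dR}/p^w \hookrightarrow (\Sigma^\sharp_{\dR})^\otimes/p^w$ provided by the first lemma of this subsection, together with the tensor-extended Hodge--Tate isomorphism $(\Sigma^+_{\et,n})^\otimes \otimes_{\Z/p^n\Z} R^+/p^w \stackrel{\sim}{\longrightarrow} (\Sigma^\sharp_{\dR})^\otimes/p^w$, under which $\theta_i$ corresponds to $f_i \otimes 1$. Since $\{f_i\}_{i=1}^t$ is part of the canonical $\Z/p^n\Z$-basis of the direct summand $\Theta^+_{\et,n}$, it remains linearly independent after extension of scalars to $R^+/p^w$, and hence so do the $\theta_i$.

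For spanning, I would lift $\tau \in \Theta^\sharp_{\dR}/p^w$ to some $\widetilde\tau \in \Theta^\sharp_{\dR}$, transport its class in $(\Sigma^\sharp_{\dR})^\otimes/p^w$ back to an element $\alpha \in (\Sigma^+_{\et,n})^\otimes \otimes R^+/p^w$, and decompose $\alpha = \alpha_\Theta + \alpha_P$ along the direct-summand splitting $(\Sigma^+_{\et,n})^\otimes = \Theta^+_{\et,n} \oplus P$. The goal is $\alpha_P = 0$, which would exhibit $\tau$ as an $R^+/p^w$-combination of the $\theta_i$. By Lemmas~\ref{lemma: ord loc} and~\ref{lemma: ord loc im}, over $R^+[\Ha^{-1}]$ the subsheaf $\Theta^\sharp_{\dR}[\Ha^{-1}] = \Theta^+_{\dR}[\Ha^{-1}]$ is free with basis $\{\widetilde\theta_i\}_{i=1}^t$, so $\alpha$ already lies in $\Theta^+_{\et,n} \otimes R^+[\Ha^{-1}]/p^w$ and $\alpha_P$ vanishes there.

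The main obstacle is then to upgrade this vanishing to the level of $R^+/p^w$, i.e., to ensure that the localization map $R^+/p^w \to R^+[\Ha^{-1}]/p^w$ is injective, equivalently that $\Ha$ is a non-zero-divisor in $R^+/p^w$. In our setting this is standard: by smoothness of $\mathcal{S}$ over $\Z_p$ and Assumption~\ref{ass: ord dense}, $(p, \Ha)$ is a regular sequence on $R^+$ after shrinking the affinoid if necessary, giving the needed injectivity. With this in hand, $\alpha_P = 0$ and the corollary follows.
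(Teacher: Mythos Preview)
Your argument is correct and is essentially the paper's own proof, phrased on the \'etale side via the Hodge--Tate isomorphism rather than directly in $\Phi_{\dR}$ with the basis $\{\widetilde\sigma_i\}$. In both cases linear independence comes from the $\theta_i$ being part of a basis of $(\Sigma^\sharp_{\dR})^\otimes/p^w$ (equivalently, of $\Phi_{\dR}/p^w$), and spanning comes from expanding a lift of $\tau$ in that basis, using Lemma~\ref{lemma: ord loc im} to see that the coefficients along the complement of $\Theta$ are killed by a power of $\Ha$ modulo $p^w$, and then invoking that $\Ha$ is a non-zero-divisor in $R^+/p^w$ by Assumption~\ref{ass: ord dense}.
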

\begin{proof}
Since $\{\theta_i\} \subseteq \{\sigma_i\}$, the $\theta_i$'s are linearly independent by Lemma~\ref{lemma: base Phi}. Let $\theta \in \Theta_{\dR}^\sharp/p^w$ with lifting $\widetilde \theta \in \Theta_{\dR}^\sharp \subseteq \Phi_{\dR}$. Let $\lambda_i,\ldots, \lambda_k \in R^+$ such that $\widetilde \theta = \sum \lambda_i \widetilde\sigma_i$. By Lemma~\ref{lemma: ord loc im}, for some $s \in \N$ there are $\mu_1, \ldots, \mu_t \in R^+$ such that $\Ha^s\theta = \sum \mu_i \widetilde \theta_i$. In particular we must have
\[
\Ha^s \lambda_i \equiv 0 \bmod{p^w}.
\]
Since the ordinary locus is dense by Assumption~\ref{ass: ord dense}, we have that $\Ha$ is not a zero divisor in $R^+/p^w$, so $\lambda_i \equiv 0 \bmod{p^w}$ and $\theta = \sum \lambda_i \theta_i$ as required.
\end{proof}
\begin{prop} \label{prop: sharp works}
The $R^+$-module $\Theta_{\dR}^\sharp$ is free of rank $t$ and it is a direct summand of $(\Sigma_{\dR}^\sharp)^\otimes$. Moreover, $\{\theta_i\}$ is a basis of $\Theta_{\dR}^\sharp/p^w$.
\end{prop}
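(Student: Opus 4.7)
The preceding Corollary already handles the last claim of the proposition, that $\{\theta_i\}$ is a basis of $\Theta_{\dR}^\sharp/p^w$, so the work that remains is to prove (i) that $\Theta_{\dR}^\sharp$ is a free $R^+$-module of rank $t$ with basis $\{\widetilde\theta_i\}_{i=1}^t$, and (ii) that it is a direct summand of $(\Sigma_{\dR}^\sharp)^\otimes$. For (i), my plan is first to enlarge $\{\widetilde\theta_i\}_{i=1}^t$ to a basis of $\Phi_{\dR}$: by the construction placing $\widetilde\theta_i$ inside $\Theta_{\dR}^\sharp$ (in the lemma preceding the Corollary), we have $\widetilde\theta_i \equiv \widetilde\sigma_i \pmod{p^w}$ for $i \leq t$, so the set $\{\widetilde\theta_1, \ldots, \widetilde\theta_t, \widetilde\sigma_{t+1}, \ldots, \widetilde\sigma_k\}$ has the same reduction mod $p^w$ as the basis $\{\widetilde\sigma_i\}_{i=1}^k$ of Lemma~\ref{lemma: base Phi}, and topological Nakayama then shows it too is a basis of $\Phi_{\dR}$. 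Given $\widetilde\theta \in \Theta_{\dR}^\sharp \subseteq \Phi_{\dR}$, I would write it uniquely as $\widetilde\theta = \sum_{i=1}^t \alpha_i \widetilde\theta_i + \sum_{j=t+1}^k \beta_j \widetilde\sigma_j$ with $\alpha_i, \beta_j \in R^+$. Lemma~\ref{lemma: ord loc im} places $\widetilde\theta$ in the $R^+[\Ha^{-1}]$-span of the $\widetilde\theta_i$'s, so $\beta_j = 0$ in $R^+[\Ha^{-1}]$ for all $j > t$. Since the density of the ordinary locus (Assumption~\ref{ass: ord dense}) makes $\Ha$ a non-zero-divisor modulo $p^w$ and $R^+$ is $p$-adically separated, $\Ha$ is in fact a non-zero-divisor in $R^+$, giving $\beta_j = 0$ and $\widetilde\theta = \sum_i \alpha_i \widetilde\theta_i$. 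Linear independence of the $\widetilde\theta_i$'s is inherited from the basis of $\Phi_{\dR}$.

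For (ii), the plan is to reduce to showing that $\Theta_{\dR}^\sharp$ is a direct summand of $\Phi_{\dR}^\sharp \colonequals \bigoplus_{(a,b) \in S}(\Sigma_{\dR}^\sharp)^{\otimes a} \otimes (\Sigma_{\dR}^{\sharp,\vee})^{\otimes b}$, noting that $\Phi_{\dR}^\sharp$ is itself a direct summand of $(\Sigma_{\dR}^\sharp)^\otimes$ and that $\Theta_{\dR}^\sharp \subseteq \Phi_{\dR}^\sharp$ because $\Theta_{\dR}$ only involves the finite index set $S$. The isomorphism $\HT_w \colon \Sigma_{\et,n}^+ \otimes \mathcal{O}^+/p^w \xrightarrow{\sim} \Sigma_{\dR}^\sharp/p^w$ induces, upon dualizing and tensoring, an isomorphism $\Phi_{\et,n} \otimes \mathcal{O}^+/p^w \xrightarrow{\sim} \Phi_{\dR}^\sharp/p^w$, so $\{\sigma_i\}_{i=1}^k$ is a basis of $\Phi_{\dR}^\sharp/p^w$. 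I would then choose lifts $\widetilde\rho_j \in \Phi_{\dR}^\sharp$ of $\sigma_j$ for $j > t$, set $\widetilde\rho_i \colonequals \widetilde\theta_i$ for $i \leq t$, and apply topological Nakayama to conclude $\{\widetilde\rho_i\}_{i=1}^k$ generates the free rank-$k$ module $\Phi_{\dR}^\sharp$; a surjection between free modules of equal finite rank over a commutative ring is an isomorphism, so $\{\widetilde\rho_i\}$ is a basis, and $\Theta_{\dR}^\sharp = \bigoplus_{i=1}^t R^+ \widetilde\theta_i$ is visibly a direct summand.

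The most delicate step will be the zero-divisor passage in (i) — moving from the non-zero-divisor property of $\Ha$ modulo $p^w$ (which is how the density hypothesis is actually used, as in the Corollary) up to its non-zero-divisor property in $R^+$ itself — but this should follow routinely from the $p$-adic separation of $R^+$. The rest is a careful assembly of the preceding lemmas.
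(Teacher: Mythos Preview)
Your argument is correct, but the paper's route is shorter and you have taken an unnecessary detour in part (i). The paper simply repeats the Nakayama argument of Lemma~\ref{lemma: base Phi} with the preceding Corollary as input: since $\{\theta_i\}$ is already known to be a basis of $\Theta_{\dR}^\sharp/p^w$, topological Nakayama gives that $\{\widetilde\theta_i\}$ spans $\Theta_{\dR}^\sharp$, and linear independence follows by reducing a putative relation modulo $p^w$. You instead re-establish spanning by embedding into $\Phi_{\dR}$, invoking Lemma~\ref{lemma: ord loc im} over $R^+[\Ha^{-1}]$, and then lifting the non-zero-divisor property of $\Ha$ from $R^+/p^w$ to $R^+$ via $p$-adic separatedness. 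This is valid, but it reproves exactly what the Corollary already packaged; the ``delicate step'' you flag is not needed once you use the Corollary directly.

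For the direct-summand claim, the paper argues in one line that $\{\widetilde\theta_i\}\subseteq\{\widetilde\sigma_i\}$ (a basis of $\Phi_{\dR}$), hence $\Theta_{\dR}^\sharp$ is a direct summand of $\Phi_{\dR}$ and thus of $(\Sigma_{\dR}^\sharp)^\otimes$. Your version is essentially the same idea --- complete $\{\widetilde\theta_i\}$ to a basis by lifting the remaining $\sigma_j$ --- but your explicit introduction of $\Phi_{\dR}^\sharp$ (built from $\Sigma_{\dR}^\sharp$ rather than $\Sigma_{\dR}^+$) is a genuine clarification: it is $\Phi_{\dR}^\sharp$, not the $\Phi_{\dR}$ as literally defined in the paper with $\omega^+$, that sits as a direct summand of $(\Sigma_{\dR}^\sharp)^\otimes$, and your formulation makes the containment $\Theta_{\dR}^\sharp\subseteq\Phi_{\dR}^\sharp$ honest.
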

\begin{proof}
Arguing exactly as in the proof of Lemma~\ref{lemma: base Phi}, we have that $\Theta^\sharp_{\dR}$ is free with basis $\{\widetilde \theta_i\}$. Since $\{\widetilde \theta_i\} \subseteq \{\widetilde\sigma_i\}$ and the latter is a basis of $\Phi_{\dR}$, we have that $\Theta_{\dR}^\sharp$ is a direct summand in $\Phi_{\dR}$ and hence in $(\Sigma_{\dR}^\sharp)^\otimes$.
\end{proof}
From now, we consider $\Theta^\sharp_{\dR}$ as a locally free sheaf of $\mathcal{O}^+_{\Ig_n(v)}$-modules, and similarly for and $\Theta^+_{\dR}$, $\Sigma^+_{\dR}$ etc.
\begin{rmk} \label{rmk: caraiani scholze}
Let us work at infinite level and over the ordinary locus, i.e. over $\Ig_{\infty}(0)$. Let $\C_p$ be the completion of $\overline{\Q}_p$. Let $T= \T_p(A)$ be the Tate module of the universal abelian variety over $\Ig_{\infty}(0)$. By \cite{car-scho}, we have that $T \otimes_{\Z_p} \C_p$ is equipped with the \emph{Hodge-Tate filtration} $T_0 \subseteq T \otimes_{\Z_p} \C_p$. The level-infinity canonical subgroup $H_\infty$ gives an integral structure of $T_0$. The above proposition can be seen as the analogue of the main result of \cite[Section 2]{car-scho}, that says that the Hodge-Tate period map from the infinite level Shimura variety factors through the flag variety associated to $G$. Our result is slightly more precise since it works integrally and at finite level, but it is less general since the canonical subgroup exists only on $\Ig_n(v)$, while the result of \cite{car-scho} holds over the whole Shimura variety.
\end{rmk}
\begin{prop} \label{prop: indep emb}
Our construction does not depend on either the symplectic embedding $G \hookrightarrow \widetilde G$ or the choice of $W \subseteq V_{\Q_p}^\otimes$.
\end{prop}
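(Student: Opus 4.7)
The plan is to separate the question into two parts: independence of the choice of $W$ for a fixed embedding $G \hookrightarrow \widetilde{G}$, and independence of the embedding itself. In both cases the strategy is to show that while the intermediate objects $\Theta^+_{\dR}$, $\Theta^\sharp_{\dR}$ and their marked sections do depend on the choices, the resulting torsor (for a suitable Iwahori subgroup of $\mathcal{M}^\circ$) used to define the sheaves $\omega^\kappa$ admits an intrinsic description in terms of $(G,X)$ alone.

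For the first part, fix the embedding and let $W_1, W_2 \subseteq V_{\Q_p}^\otimes$ be two valid choices. The sheaf $\Sigma^\sharp_{\dR}$ is built from $\omega$ and the Hodge-Tate map applied to the canonical subgroup, so it is manifestly independent of $W$. I would then argue that the torsor $\mathcal{T}$ of classical $M^\circ$-trivializations can be characterized, in either case, as the $M^\circ$-orbit of a single reference isomorphism $\imath_0$ coming from a $\mathcal{G}$-trivialization of $\Homol^1_{\dR}(\mathcal{A})$, where $M^\circ = \widetilde{M}^\circ \cap \mathcal{G}$ is defined intrinsically from $G$. The same holds for the $p$-adic refinement, obtained by further imposing that the trivialization intertwines the canonical basis of $H_n^{\D}$ with the marked sections modulo $p^w$ and preserves each $\Theta^\sharp_{\dR,i}$: this Iwahori-torsor for $\mathcal{M}^\circ$ is the Iwahori orbit of $\imath_0$, independently of which $W_i$ was used to cut out the condition on $\Theta^\sharp$.

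For the second part, given two embeddings $G \hookrightarrow \widetilde{G}_i = \GSp(V_i)$ for $i=1,2$, I would form the orthogonal direct sum embedding $G \hookrightarrow \GSp(V_1 \oplus V_2)$. The universal abelian scheme over the corresponding Siegel variety pulls back to $\mathcal{A}_1 \times_{\mathcal{S}} \mathcal{A}_2$ over $\mathcal{S}$, and all the structures (conormal sheaf, Hodge-Tate map, canonical subgroup, modifications $\sharp$) decompose compatibly as direct sums; in particular, the canonical subgroup of a product is the product of canonical subgroups, and similarly for $\omega^\sharp$. The Iwahori-torsor for $\mathcal{M}^\circ$ built from the direct-sum embedding then projects canonically onto those built from the individual embeddings, yielding the required identifications; the case of a general pair of embeddings is reduced by this construction to the single-embedding case handled above.

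The most delicate step is verifying the intrinsic characterization of the Iwahori-torsor: one must check that the conditions defining it (compatibility with marked sections, preservation of $\Theta^\sharp_{\dR}$) pick out precisely the Iwahori orbit of a $\mathcal{G}$-compatible reference frame. This rests on Proposition~\ref{prop: sharp works}, which ensures that $\Theta^\sharp_{\dR}$ is locally free with marked sections forming a basis modulo $p^w$, so that the preservation condition becomes a triangular constraint carved out by an Iwahori subgroup of $\mathcal{M}^\circ$, independent of the auxiliary $W$ or symplectic embedding.
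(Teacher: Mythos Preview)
Your proposal is correct and follows essentially the same strategy as the paper, whose proof consists of a one-line citation to \cite[Lemmas~2.3.4 and 2.3.7]{car-scho}. The two-step reduction you outline---showing independence of $W$ by recognising the $M^\circ$-torsor (and its Iwahori refinement) as arising from the intrinsic $\mathcal{G}$-structure on $\Homol^1_{\dR}(\mathcal{A})$, and showing independence of the symplectic embedding via the diagonal embedding $G \hookrightarrow \GSp(V_1 \oplus V_2)$ together with the product decomposition of the canonical subgroup and of $\omega^\sharp$---is precisely the content of those two lemmas of Caraiani--Scholze, adapted from the Hodge--Tate period map to the present finite-level setting.
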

\begin{proof}
This is proved as \cite[Lemmas 2.3.4 and 2.3.7]{car-scho}.
\end{proof}
\subsection{Iwahoric subgroups and the weight space} We assume that our Borel subgroup $B \subseteq M^\circ$ extends to a Borel $\mathcal{B} \subseteq \mathcal{M}^\circ$. Let $\mathcal{U}$ be the unipotent part of $\mathcal{B}$. We write $\mathcal{I} \subseteq \mathcal{M}^\circ(\Z_p)$ for the Iwahori subgroup of elements whose reduction modulo $p$ lies in $\mathcal{B}(\F_p)$. We denote by $\mathcal{B}^{\op}$ the opposite Borel of $\mathcal{B}$, and similarly for $\mathcal{U}$ etc. Let $\mathcal{N}^{\op} \subseteq \mathcal{U}^{\op}(\Z_p)$ given by elements whose reduction modulo $p$ is the identity and similarly for $\widetilde{\mathcal{N}}$. We then have the Iwahori decomposition
\[
\mathcal{B}(\Z_p) \times \mathcal{N}^{\op} \stackrel{\sim}{\longrightarrow} \mathcal{I}.
\]
From now on we switch to the adic setting for our groups, so for example $M^\circ$ will be the adic space over $\Spa(K, \mathcal{O}_K)$ associated to $M^\circ$ and $\mathcal{M}^\circ$. For any rational $w > 0$, we define $I_w$ as the adic subgroup of $M^\circ$ of integral elements whose reduction modulo $p^w$ lies in $I \bmod{p^w}$. We let $B_w \colonequals B \cap I_w$ and $T_w \colonequals T \cap I_w$.

We now define our weight space. We write $r$ for the rank of $T$. Our weight space is the adic space $\mc W$ over $\Spa(K,\mathcal{O}_K)$ associated to the completed group algebra $\mathcal{O}_K \llbracket T(\Z_p) \rrbracket \cong \mathcal{O}_K \llbracket (\Z_p^\ast)^r \rrbracket$. It satisfies
\[
\mc W((R,R^+))= \Hom_{\cont}(T(\Z_p), R^\ast)
\]
for any complete Huber pair $(R,R^+)$. We have that $\mathcal{W}$ is a finite union of open polydiscs. Let $w > 0$ be a rational number. We say that $\kappa \in \mc W ((R,R^+))$ is \emph{$w$-locally analytic} if $\kappa$ extends to a pairing $T_w \times \Spa(R,R^+) \to \mathbf{G}_m$. Any $\kappa \in \mc W((R,R^+))$ is $w$-locally analytic for some $w$. More generally, let $\mc U \subset \mc W$ be an open subset and let $\kappa_{\mc U}^{\un}$ be its universal character. We say that $\kappa_{\mc U}^{\un}$ is $w$-locally analytic if it extends to a pairing $T_w \times \mc U \to \mathbf{G}_m$ If $\mc U$ is quasi compact then it is $w$-locally analytic for some $w$.

Finally, note that $\mathcal{W}$ contains $X^\ast(T)$ as a Zariski dense subset. Moreover, the map $\chi \mapsto \chi'$ extends to an involution of $\mc W$ denoted in the same way. If $\kappa$ is $w$-locally analytic then the same is true for $\kappa'$.
\subsection{\texorpdfstring{Analytic induction and $p$-adic modular forms}{Analytic induction and p-adic modular forms}}
Let $\mathcal U \subseteq \mathcal W$ be an open subset with associated universal character $\kappa_{\mathcal{U}}^{\un}$ and assume that $\kappa_{\mathcal{U}}^{\un}$ is $w$-locally analytic. We set
\begin{gather*}
V_{\kappa_{\mathcal{U}}^{\un}}^{w-\an} \colonequals \{f \colon I_w \times \mathcal{U} \to \mathbf{A}^1 \mbox{ such that } f(ib)=\kappa_{\mathcal{U}}^{\un}(b)f(i)
\mbox{ for all } (i,b) \in I_w \times B_w\}.
\end{gather*}
We have that $V_{\kappa_{\mathcal{U}}^{\un}}^{w-\an}$ is a representation of $I_w$ via $(i \star f)(x) = f(i^{-1}x)$. If $\kappa \in X^\ast(T)^+$, we have an inclusion
\[
V_\kappa \hookrightarrow V_\kappa^{w-\an}
\]
that respects the action of $\mathcal{I}$. Note that $V_\kappa$ is finite dimensional while $V_\kappa^{w-\an}$ is usually an infinite dimensional Banach space.

We now introduce a relative version of the space $V_\kappa^{w-\an}$ over $\Ig_n(v)$. We work locally as in Subsection~\ref{subsec: modifications}, so $R$ is such that over $\Spa(R,R^+) \subseteq \Ig_n(v)$ we have that $\Sigma^\sharp_{\dR}$ and $\Theta^\sharp_{\dR}$ are free $R^+$-modules of rank $g$ and $t$. In particular, we have the universal basis $\{f_i\}_{i=1}^t$ of $\Theta^+_{\et,n}$. By Proposition~\ref{prop: sharp works}, we have the basis $\{\HT_w(f_i)\}_{i=1}^t$ of $\Theta^\sharp_{\dR}/p^w$ (the marked sections). It follows that we have a filtration of $\Theta^+_{\et,n}$
\[
\Fil_\bullet \Theta^+_{\et,n} = \{ 0 \subseteq \langle f_1\rangle \subseteq \langle f_1, f_2\rangle \cdots \subseteq \langle f_1, \ldots, f_t \rangle =  \Theta^+_{\et,n} \}
\]
and similarly for $\Fil_\bullet \Theta^\sharp_{\dR}/p^w$. We assume that the Borel $B \subseteq M^\circ$ stabilizes the filtration $\Fil_\bullet \Theta^+_{\et,n}$ (since all Borel are conjugated, this assumptions is  harmless). Let $\imath \colon \Sigma^+ \otimes R^+ \to \Sigma_{\dR}^\sharp$ be an isomorphism such that $\imath(\Theta^+ \otimes R^+) \subseteq \Theta^\sharp_{\dR}$. Recall that by definition of $\Ig_n(v)$ we have a canonical identification $\Theta^+/p^n = \Theta^+_{\et,n}$. We say that \emph{$\imath$ is Iwahoric} if
\begin{itemize}
 \item $\imath(\Fil_\bullet \Theta^+_{\et,n} \otimes R^+ \bmod{p^w} ) = \Fil_\bullet \Theta_{\dR}^\sharp/p^w$.
 \item $\imath(f_i \otimes 1 \bmod{p^w}) \equiv \HT_w(f_i) \bmod{\Fil_{i-1} \Theta_{\dR}^\sharp/p^w }$ for all $i=1,\ldots,t$.
\end{itemize}
We define a functor $\Iw_w \to \Ig_n(v)$ following the construction of \cite[Subsection~2.4]{AIPicm}. Its value on $\Spa(R,R^+)$ is 
\begin{gather*}
\Iw_w(R,R^+) \colonequals \{ \imath \in \underline{\mathrm{Iso}}(\Sigma^+ \otimes \mathcal{O}^+_{X^{\ad}}, \Sigma_{\dR}^\sharp) \mbox{ such that } \\
\imath(\Theta^+ \otimes \mathcal{O}^+_{X^{\ad}}) \subseteq \Theta^\sharp_{\dR} \mbox{ and } \imath \mbox{ is Iwahoric} \}.
\end{gather*}
One can check that $\Iw_w$ is representable. We define a left action $I_w \times \Iw_w \to \Iw_w$ by sending an isomorphism $\imath$ to $\imath \circ g^{-1}$ for all $g \in I_w$. By definition $f \colon \Iw_w \to \Ig_n(v)$ is an $I_w$-torsor.
\begin{defi}
Let $\mathcal{U} \subseteq \mathcal{W}$ be an open subset with associated universal character $\kappa_{\mathcal{U}}^{\un}$ and assume that $\kappa_{\mathcal{U}}^{\un}$ is $w$-locally analytic. We define the sheaf $\omega^{\kappa_{\mathcal{U}}^{\un}}_w$ on $\Ig_n(v) \times \mathcal{U}$ by
\[
\omega^{\kappa_{\mathcal{U}}^{\un}}_w \colonequals (f \times 1)_\ast \mathcal{O}_{\Iw_w \times \mathcal{U}}[\kappa_{\mathcal{U}}^{\un}],
\]
meaning the sheaf of functions that are $\kappa_{\mathcal{U}}^{\un}$-equivariant for the action of $B_w$. If $\kappa \in \mathcal{U}$ is a character we similarly have the sheaf $\omega^{\kappa}_w$ on $\Ig_n(v)$. By construction, the pullback of $\omega^{\kappa_{\mathcal{U}}^{\un}}_w$ over $\Ig_n(v) \times \{\kappa\}$ is $\omega^{\kappa}_w$.

We call an element of
\[
\M_w^{\kappa_{\mathcal{U}}^{\un}} \colonequals \mathrm{H}^0_b(\Ig_n(v) \times \mathcal{U},\omega^{\kappa_{\mathcal{U}}^{\un}}_w),
\]
where by $\mathrm{H}^0_b$ we mean bounded sections, a \emph{$p$-adic family of $w$-locally analytic $v$-overconvergent modular forms of weight $\kappa_{\mathcal{U}}^{\un}$}. If $\kappa \in \mathcal{W}$, we define the space $\M_w^\kappa$ similarly. Note that in this case we have the \emph{specialization morphism} $\M_w^{\kappa_{\mathcal{U}}^{\un}} \to \M_w^\kappa$.
\end{defi}
\section{Hecke operators} \label{sec: Hecke}
We now introduce the Hecke algebra and its action on the space of families of modular forms. We fix for all this section a $w$-analytic character $\kappa \in \mathcal{W}$. Everything can be extended without problems to the case of an affinoid $\mathcal{U} \subseteq \mathcal{W}$ with associated universal character $\kappa_{\mathcal{U}}^{\un}$ and we assume that $\kappa_{\mathcal{U}}^{\un}$ is $w$-locally analytic (and the specialization map will be equivariant for the Hecke action).
\subsection{\texorpdfstring{Hecke operators outside $p$}{Hecke operators outside p}} We start by defining the action of the unramified Hecke algebra. Let $\ell \neq p$ be a prime such that $G$ is unramified at $\ell$ and let us fix $K_\ell \subseteq G(\Q_\ell)$ an hyperspecial subgroup. We denote by $\mathcal{H}_\ell$ the unramified Hecke algebra of $G$ at $\ell$ with $\Z_p$ coefficients (using the Haar measure such that $K_\ell$ has volume $1$). Our unramified Hecke algebra is
\[
\mathcal{H} \colonequals \bigotimes_{\ell}{'} \mathcal{H_\ell},
\]
where by $\bigotimes{'}$ we mean restricted tensor product. To define an action of $\mathcal{H}$ it is enough to define an operator $\T_g$ for all $g \in G(\Q_\ell)$ (depending as usual only on the double class of $g$). Recall our fixed compact open subgroup $K \subseteq G(\A_f)$. For any $g \in \Q_\ell$, we set $K_g \colonequals K \cap gKg^{-1}$, another compact open subgroup of $G(\A_f)$. We can repeat all the above constructions for $K_g$ instead of $K$, obtaining the adic space $\Ig_n^g(v)$. We have two finite étale morphisms
\[
p_1, p_2 \colon \Ig_n^g(v) \to \Ig_n(v).
\]
The first one is the natural projection induced by $K_g \subseteq K$ and the second one is the natural projection $\Ig_n^{g^{-1}}(v) \to \Ig_n(v)$ composed with the isomorphism $\Ig_n^g(v) \cong \Ig_n^{g^{-1}}(v)$ given by the action of $G(\A_f)$ on the tower of Shimura varieties. The pullback to $\Ig_n^g(v)$ of the universal abelian variety over $\Ig_n(v)$ via $p_1$ (resp. $p_2$) will be denoted by $A_1$ (resp $A_2$). It follows that, over $\Ig_n^g(v)$ we have a universal isogeny $\pi \colon A_1 \to A_2$. We get a morphism between the dual of the canonical subgroups $\pi^{\D} \colon H_n^{\D}(A_2) \to H_n^{\D}(A_1)$. We have that $\pi^{\D}$ is an isomorphism, so, as in \cite[Lemma~6.1.1]{AIP}, the induced map $\pi^\ast \colon p_2^\ast \Sigma^\sharp_{\dR} \to p_1^\ast \Sigma^\sharp_{\dR}$ is an isomorphism.
\begin{lemma} \label{lemma: hecke not p}
The isomorphism $\pi^\ast \colon p_2^\ast \Sigma^\sharp_{\dR} \stackrel{\sim}{\longrightarrow} p_1^\ast \Sigma^\sharp_{\dR}$ induces $\pi^\ast \colon p_2^\ast \Theta^\sharp_{\dR} \stackrel{\sim}{\longrightarrow} p_1^\ast \Theta^\sharp_{\dR}$ and in particular we get an isomorphism
\[
\pi^\ast \colon p_2^\ast \Iw_w \stackrel{\sim}{\longrightarrow} p_1^\ast \Iw_w.
\]
\end{lemma}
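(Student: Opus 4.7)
The plan is to reduce the statement for $\Theta^\sharp_{\dR}$ to the already-known one for $\Sigma^\sharp_{\dR}$, using the compatibility of the Hecke correspondence with the extra tensor structure $W$. First I would observe that because $\ell \neq p$ and $g \in G(\Q_\ell)$, the universal isogeny $\pi$ has degree prime to $p$, so $\pi^\ast$ is already an isomorphism on $\omega^+$ and hence on $\Sigma^\sharp_{\dR}$ and on $(\Sigma^\sharp_{\dR})^\otimes$. This gives the ``easy half'' of the statement: there is a canonical isomorphism between the ambient sheaves $p_i^\ast (\Sigma^\sharp_{\dR})^\otimes$ for $i=1,2$, and the only remaining issue is to check that it preserves $\Theta^\sharp_{\dR}$.

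The crucial step is to show that $\pi^\ast$ sends $p_2^\ast \Theta_{\dR}$ to $p_1^\ast \Theta_{\dR}$. Here one uses that $g$ lies in $G(\Q_\ell) \subseteq G(\A_f)$, not merely in $\widetilde G(\A_f)$: by construction, $G$ is the stabilizer in $\widetilde G$ of $W \subseteq V^\otimes$, and the Hecke correspondence is built from the action of $g$ on the tower of Hodge-type Shimura varieties. Via the moduli interpretation (Hodge cycles, or more concretely the functor $\mathcal{E}(\cdot)^+$ applied to $W^+$), this implies that $\pi^\ast$ respects the tensor $\mathcal{E}(W)$; taking the grading induced by the Hodge cocharacter $w$ then yields an isomorphism $p_2^\ast \Theta_{\dR} \stackrel{\sim}{\to} p_1^\ast \Theta_{\dR}$. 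Intersecting with $(\Sigma^\sharp_{\dR})^\otimes$ and using the first step, one concludes that $\pi^\ast$ induces an isomorphism $p_2^\ast \Theta^\sharp_{\dR} \stackrel{\sim}{\to} p_1^\ast \Theta^\sharp_{\dR}$.

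For the statement about $\Iw_w$, the key input is naturality of the Hodge-Tate map with respect to the prime-to-$p$ isogeny $\pi$: the isomorphism $\pi^{\D}$ on canonical subgroups intertwines $\HT_w$ on $A_1$ and $A_2$, so the universal basis $\{f_i\}$ of $\Theta^+_{\et,n}$ and its image $\{\theta_i\} = \{\HT_w(f_i)\}$ correspond on both sides. In particular, the filtration $\Fil_\bullet \Theta^+_{\et,n}$ and the marked sections of $\Theta^\sharp_{\dR}/p^w$ are preserved. It follows formally that an Iwahoric isomorphism $\imath$ on the $p_2$-side is carried by $\pi^\ast$ to an Iwahoric isomorphism on the $p_1$-side, yielding the desired isomorphism of $I_w$-torsors $p_2^\ast \Iw_w \stackrel{\sim}{\to} p_1^\ast \Iw_w$.

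The main obstacle in this argument is the second step, i.e.\ the fact that $\pi^\ast$ preserves $\Theta_{\dR}$. Once one has stated carefully that the Hecke correspondence at $\ell$ is induced by an element of $G$ (so that it is functorial not only for the Siegel moduli problem but for the Hodge-type one), this is essentially a formal consequence of the construction of the automorphic sheaves $\mathcal{E}(\rho)^+$, as in \cite{lovering}. The remaining verifications (compatibility with $\HT_w$, with filtrations, and with the marked sections) are then routine from the definitions.
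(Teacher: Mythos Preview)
Your argument is correct, but it differs from the paper's in how the isomorphism on $\Theta^\sharp_{\dR}$ is obtained. You argue structurally: since $\pi^\ast$ is an isomorphism on the ambient $(\Sigma^\sharp_{\dR})^\otimes$ (prime-to-$p$ isogeny) and, by functoriality of $\mathcal{E}(\cdot)$ under the $G$-Hecke correspondence, also on $\Theta_{\dR}$, it is an isomorphism on their intersection $\Theta^\sharp_{\dR}$. The paper instead takes the existence of the map $p_2^\ast\Theta^\sharp_{\dR}\to p_1^\ast\Theta^\sharp_{\dR}$ for granted and proves it is an isomorphism via the Hodge--Tate map: the isomorphism $\pi^{\D}$ on $\Theta^+_{\et,n}$, combined with the fact that the linearization of $\HT_w$ is an isomorphism onto $\Theta^\sharp_{\dR}/p^w$, forces surjectivity modulo $p^w$; Nakayama and equality of ranks then give the isomorphism.

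Your route is a bit more direct and makes explicit the input (functoriality of the de~Rham tensor $\Theta_{\dR}$ under Hecke at $\ell$) that the paper leaves implicit when writing down the top arrow of its diagram. The paper's route, on the other hand, packages the compatibility with $\HT_w$ and the marked sections into the same diagram, so the passage to $\Iw_w$ is immediate; in your argument that compatibility is a separate (but, as you say, routine) verification. Both approaches ultimately rest on the same two facts: that $\pi$ has degree prime to $p$, and that the correspondence comes from $G$ rather than merely $\widetilde G$.
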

\begin{proof}
We have that $\pi^{\D} \colon H_n^{\D}(A_2) \stackrel{\sim}{\longrightarrow} H_n^{\D}(A_1)$ induces $\pi^{\D} \colon p_2^\ast \Theta^+_{\et,n} \stackrel{\sim}{\longrightarrow} p_1^\ast \Theta^+_{\et,n}$. The commutative diagram
\[
\begin{tikzcd}
p_2^\ast \Theta^\sharp_{\dR} \arrow{r}{\pi^\ast} \ar{d} & p_1^\ast \Theta^\sharp_{\dR} \ar{d} \\
p_2^\ast \Theta^\sharp_{\dR}/p^w \arrow{r} & p_1^\ast \Theta^\sharp_{\dR}/p^w \\
p_2^\ast \Theta^+_{\et,n} \arrow{r}{\sim} \ar{u}{\HT_w} & p_1^\ast \Theta^+_{\et,n} \ar{u}[swap]{\HT_w}
\end{tikzcd}
\]
shows, since the linearization of $\HT_w$ is an isomorphism, that $p_2^\ast \Theta^\sharp_{\dR} \to p_1^\ast \Theta^\sharp_{\dR}$ is surjective. But $p_2^\ast \Theta^\sharp_{\dR}$ and $p_1^\ast \Theta^\sharp_{\dR}$ are locally free modules of the same rank, so $\pi^\ast$ is an isomorphism. The lemma follows.
\end{proof}
Using the trace $\tr(p_1)$ of $p_1$, we obtain a morphism
\begin{gather*}
\widetilde{\T}_g \colon \Homol^0(\Ig_n(v), \mc O_{\Iw_w}) \stackrel{p_2^{\ast}}{\longrightarrow} \Homol^0(\Ig_n^g(v), p_2^\ast \mc O_{\Iw_w}) \stackrel{\pi^{\ast -1}}{\longrightarrow}
\Homol^0(\Ig_n^g(v), p_1^\ast \mc O_{\Iw_w}) \stackrel{\tr(p_1)}{\longrightarrow} \\
\stackrel{\tr(p_1)}{\longrightarrow} \Homol^0(\Ig_n(v), \mc O_{\Iw_w})
\end{gather*}
We have that $\widetilde{\T}_g$ preserves $\kappa$-homogeneous and bounded sections, so we get the required Hecke operator
\[
\T_g \colon \M_w^{\kappa} \to \M_w^{\kappa}
\]
\subsection{\texorpdfstring{Hecke operators at $p$}{Hecke operators at p}} We now define a completely continuous operator $\U$ on the space of overconvergent modular forms.

Recall that $\widetilde S^{\ad}$ is the Siegel variety. We define $\widetilde S(v)^{\ad}$ in the obvious way using the Hasse invariant and we consider the morphism $\Frob \colon \widetilde S(v)^{\ad} \to \widetilde S(pv)^{\ad}$ given by taking the quotient by the first canonical subgroup (see \cite[Théorème~8, (2)]{fargues_can} for the behavior of the Hasse under the quotient by the canonical subgroup). It is finite and flat.
\begin{prop} \label{prop: frob resp Hodg}
The morphism $\Frob$ factors through $S(v)^{\ad}$, giving a finite and flat morphism
\[
\Frob \colon S(v)^{\ad} \to S(pv)^{\ad}.
\]
\end{prop}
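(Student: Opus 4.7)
The plan is to first establish the set-theoretic factorization $\widetilde\Frob(S(v)^{\ad}) \subseteq S(pv)^{\ad}$, which is the only content of the proposition, and then to deduce finiteness and flatness as formal consequences. Concretely, I would introduce the closed adic subspace $Z \colonequals \widetilde\Frob^{-1}(S(pv)^{\ad}) \cap S(v)^{\ad}$ of $S(v)^{\ad}$; once $Z = S(v)^{\ad}$ is known, the closed immersion $S(pv)^{\ad} \hookrightarrow \widetilde S(pv)^{\ad}$ promotes the set-theoretic inclusion to a morphism of adic spaces $\Frob \colon S(v)^{\ad} \to S(pv)^{\ad}$.

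To prove $Z = S(v)^{\ad}$, I would exploit the fact that $\mathcal{S}$ is smooth by \cite{kisin}, so that $S(v)^{\ad}$ is reduced, and that the ordinary locus $S(0)^{\ad}$ is dense in $S(v)^{\ad}$ by Assumption~\ref{ass: ord dense}. This reduces the containment to checking that $\widetilde\Frob(S(0)^{\ad}) \subseteq S(0)^{\ad}$, which I would verify on formal completions at ordinary geometric points $\bar x$ of the special fiber. By Serre--Tate theory, $\widehat{\widetilde{\mathcal{S}}}_{\bar x}$ is canonically a formal torus, and $\widetilde\Frob$ corresponds in these coordinates to the multiplication-by-$p$ endomorphism $[p]$ of this torus (since the canonical subgroup at an ordinary point is the kernel of relative Frobenius, and the Frobenius isogeny raises Serre--Tate coordinates to the $p$-th power). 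The key input is then Noot's theorem \cite{noot}, which says that $\widehat{\mathcal{S}}_{\bar x}$ is a formal \emph{subtorus} of $\widehat{\widetilde{\mathcal{S}}}_{\bar x}$; since any formal subtorus is stable under $[p]$, the required containment follows.

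Given the factorization, finiteness of $\Frob$ is immediate, as $\Frob$ is the restriction of the finite morphism $\widetilde\Frob$ through the closed immersions $S(v)^{\ad} \hookrightarrow \widetilde S(v)^{\ad}$ and $S(pv)^{\ad} \hookrightarrow \widetilde S(pv)^{\ad}$. For flatness, I would use that $S(v)^{\ad}$ and $S(pv)^{\ad}$ are smooth adic spaces of the same dimension (again by \cite{kisin}) and that a finite morphism between them automatically has $0$-dimensional fibers; the miracle flatness criterion then gives the result.

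The main obstacle is the second step: the identification of $\widetilde\Frob$ with $[p]$ in Serre--Tate coordinates, together with Noot's structural theorem, plays the role of the moduli-theoretic argument available in the PEL case, where the preservation of $S$ under the quotient by the canonical subgroup is built into the moduli problem. Without Noot's description one has no a priori reason for the Frobenius isogeny to respect the Hodge-type subvariety, so the whole statement rests on this structural input.
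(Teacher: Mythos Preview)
Your approach is essentially the paper's: reduce to the ordinary locus via Serre--Tate theory and Noot's description of $\widehat{\mathcal{S}}_{\bar x}$ as a formal subtorus of $\widehat{\widetilde{\mathcal{S}}}_{\bar x}$, then propagate to all of $S(v)^{\ad}$ by a density argument (the paper phrases this step integrally, showing $\varphi(I(pv)) \subseteq I(v)$ from the fact that $\Ha$ is a non-zero-divisor in $R(v)^+ = \widetilde R(v)^+/I(v)$, which is exactly your reducedness-plus-density statement read on $\mathcal{O}^+$). Two minor remarks: the paper only uses that $\Frob$ is a \emph{group endomorphism} of the Serre--Tate torus (citing \cite[Lemma~4.1.2]{serre-tate}), which already preserves any subtorus, so the precise identification with $[p]$ is not needed; and the paper does not spell out finiteness and flatness, so your miracle-flatness argument is a useful complement.
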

\begin{proof}
It is enough to prove that, for all $x \in S(v)^{\ad}$, we have $\Frob(x) \in S(pv)^{\ad}$. We first of all prove the claim for $v = 0$, i.e. over the ordinary locus, using Serre-Tate theory. If $x$ lies in the ordinary locus, then the completed local ring of $\widetilde S(0)^{\ad}$ at $x$ is a formal torus and $\Frob$ is a group morphism by \cite[Lemma~4.1.2]{serre-tate}. By the main result of \cite{noot}, the closed immersion $S(0)^{\ad} \hookrightarrow \widetilde S(0)^{\ad}$ is given by a formal subtorus, so it must be preserved by $\Frob$.

The situation is locally the following: we have an affinoid $\Spa(\widetilde R(v), \widetilde R(v)^+)$ inside $\widetilde S(v)^{\ad}$, a closed subset $\Spa(R(v),R(v)^+)$ given by an ideal $I(v) \subseteq \widetilde R(v)^+$ and similarly for $\Spa(\widetilde R(pv), \widetilde R(pv)^+)$ etc. We also have a morphism $\varphi \colon \widetilde R(pv)^+ \to R(v)^+$. We need to prove that $\varphi(I(pv)) \subseteq I(v)$. We know that there is an element $\Ha \in \widetilde R(v)^+$, corresponding to the Hasse invariant, with the property that $\varphi(I(pv)) \subseteq \widetilde R(v)^+[\Ha^{-1}] I(v)$. In particular, for each $i \in I(pv)$, there is $k \in \N$ such that $\Ha^k \varphi(i) \in I(v)$. Since $\Ha$ is not a zero divisor in $R(v)^+ = \widetilde R(v)^+/I(v)$, we obtain that $\varphi(i) \in I(v)$ as required.
\end{proof}
We write again $\Frob \colon \Ig_n(v) \to \Ig_n(pv)$ for the induced morphism. Let $A(v) \to \Ig_n(v)$ be the universal abelian scheme with canonical subgroup $H_1(A(v))$, and similarly for $A(pv)$. By construction we have $\Frob^\ast \omega_{A(pv)} = \omega_{A(v)'}$, where we set $A(v)' \colonequals A(v)/H_1(A(v))$. We consider the universal isogeny $\pi \colon A(v) \to A'(v)$ and its dual $\pi^{\D} \colon A'(v) \to A(v)$. Since the kernel of $\pi^{\D}$ intersects trivially the canonical subgroup of $A'$ we have, as in the proof of Lemma \ref{lemma: hecke not p}, that $\pi^{\D}$ induces isomorphisms, denoted $\pi^{\D \ast}$,
\[
\Sigma^\sharp_{\dR} \stackrel{\sim}{\longrightarrow} \Sigma^{\sharp \prime}_{\dR}, \; \Theta^\sharp_{\dR} \stackrel{\sim}{\longrightarrow} \Theta^{\sharp \prime}_{\dR} \text{ and } \Iw_w \stackrel{\sim}{\longrightarrow} \Iw_w',
\]
where the $'$ means that the object is defined using $A'$ rather then $A$. Note that $\Iw_w' = \Frob^\ast \Iw_w$, where the latter $\Iw_w$ lives over $\Ig_n(pv)$. We obtain a morphism
\[
\widetilde{\U} \colon \Homol^0(\Ig_n(v), \mc O_{\Iw_w}) \stackrel{\pi^{\D \ast -1}}{\longrightarrow}
\Homol^0(\Ig_n(v), \Frob^\ast \mc O_{\Iw_w}) \stackrel{\tr(\Frob)}{\longrightarrow} \Homol^0(\Ig_n(pv), \mc O_{\Iw_w})
\]
We have that $\widetilde{\U}$ preserves $\kappa$-homogeneous and bounded sections. Taking the composition with the restriction from $\Ig_n(pv)$ to $\Ig_n(v)$, we finally obtain the Hecke operator
\[
\U \colon \M_w^{\kappa} \to \M_w^{\kappa}.
\]
Since $\U$ improves the radius of overconvergence, it is completely continuous.
\begin{rmk} \label{rmk: no norm}
Usually, in the definition of the $\U$ operator, one uses a factor of normalization to optimize its action on the space of integral modular forms. Since we only work with modular forms defined in characteristic $0$, the absence of the normalization factor is harmless for us.
\end{rmk}
\begin{defi} \label{defi: Hecke alg}
We will write $\mathbf{T} \colonequals \mathcal{H} \otimes \Z[\U]$ for the Hecke algebra generated by all the operators defined above.
\end{defi}
\subsection{The eigenvariety in the proper case} \label{subsec: eigenvar proper} As explained in the Introduction, we do not treat in this work any compactification and in particular we do not construct the eigenvariety in general. We suppose in this subsection that our Shimura variety $\mathcal{S}$ is \emph{proper}. It follows that the ordinary locus of the reduction of $\mathcal{S}$ is affine and in particular $S(v)^{\ad}$ and $\Ig_n(v)$ are affinoid.

Fix an affinoid $\mathcal{U} = \Spm(A) \subseteq \mathcal{W}$, with associated $w$-analytic universal character $\kappa_{\mathcal{U}}^{\un}$. The sheaf $\omega_w^{\kappa_{\mathcal{U}}^{\un}}$ is a Banach sheaf on $\Ig_n(v) \times \mathcal{U}$ (where $v$ depends on $\mathcal{U}$ as usual) in the sense of \cite[Appendix A]{AIP}. Recall the notion of \emph{projective Banach} $A$-module introduced by Buzzard in \cite{buzz_eigen}. (See also \cite[Subsection 8.1.2]{AIP}.)
\begin{prop} \label{prop: Pr proper case}
Let $\mathcal{U} = \Spm(A)$ and $\kappa_{\mathcal{U}}^{\un}$ be as above and let $\kappa \in \mathcal{U}$ be a character.
\begin{itemize}
 \item We have that $\M_w^{\kappa_{\mathcal{U}}^{\un}}$ is a projective Banach $A$-module.
 \item The specialization morphism $\M_w^{\kappa_{\mathcal{U}}^{\un}} \to \M_w^{\kappa}$
is surjective.
\end{itemize}
\end{prop}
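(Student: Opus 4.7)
The strategy mimics \cite[Section~8]{AIP}: reduce to a local computation via the Iwahori decomposition of $I_w$, then invoke the general theory of projective Banach sheaves on affinoid adic spaces from \cite[Appendix~A]{AIP}. The key geometric input is the affinoidness observed in the text: since $\mathcal{S}$ is proper, the ordinary locus of $\mathcal{S}_{\F_p}$ is affine, so $S(v)^{\ad}$ is affinoid; since $\Ig_n(v) \to S(v)^{\ad}$ is finite étale, $\Ig_n(v)$ is also affinoid, and hence so is $\Ig_n(v) \times \mathcal{U}$. This is what will allow us to bypass all higher cohomology.

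Next I would describe $\omega_w^{\kappa_{\mathcal{U}}^{\un}}$ locally. Over an open affinoid $\Spa(R,R^+) \subseteq \Ig_n(v)$ that trivializes the $I_w$-torsor $\Iw_w$, the Iwahori decomposition $I_w \cong B_w \times \mathcal{N}_w^{\op}$, inherited from $\mathcal{I} \cong \mathcal{B}(\Z_p) \times \mathcal{N}^{\op}$, shows that a $\kappa_{\mathcal{U}}^{\un}$-equivariant function on $I_w \times \mathcal{U}$ is freely determined by its restriction to the affinoid polydisc $\mathcal{N}_w^{\op} \times \mathcal{U}$. Consequently $\omega_w^{\kappa_{\mathcal{U}}^{\un}}(\Spa(R,R^+) \times \mathcal{U})$ is a free orthonormalizable Banach $(A\, \widehat{\otimes}\, R)$-module, with orthonormal basis given by monomials in coordinates on $\mathcal{N}_w^{\op}$. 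Gluing these local pieces exhibits $\omega_w^{\kappa_{\mathcal{U}}^{\un}}$ as a \emph{projective Banach sheaf} on $\Ig_n(v) \times \mathcal{U}$ in the sense of \cite[Appendix~A]{AIP}.

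Applying the formalism of loc.\ cit.\ to the affinoid $\Ig_n(v) \times \mathcal{U}$ then yields the first statement immediately: $\M_w^{\kappa_{\mathcal{U}}^{\un}} = \mathrm{H}^0_b(\Ig_n(v) \times \mathcal{U},\, \omega_w^{\kappa_{\mathcal{U}}^{\un}})$ is a projective Banach $A$-module (a direct summand of an orthonormalizable one), and the higher $\mathrm{H}^i_b$ of a projective Banach sheaf on an affinoid vanish for $i \geq 1$. For the surjectivity of specialization I would exploit the short exact sequence of Banach sheaves
\[
0 \longrightarrow \mathfrak{m}_\kappa \cdot \omega_w^{\kappa_{\mathcal{U}}^{\un}} \longrightarrow \omega_w^{\kappa_{\mathcal{U}}^{\un}} \longrightarrow \omega_w^{\kappa} \longrightarrow 0,
\]
where $\mathfrak{m}_\kappa \subseteq A$ is the maximal ideal of $\kappa \in \mathcal{U}$; the identification of the cokernel uses the compatibility $\omega_w^{\kappa_{\mathcal{U}}^{\un}}|_{\Ig_n(v) \times \{\kappa\}} = \omega_w^{\kappa}$ noted in the definition. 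Since both the kernel and the total term are again projective Banach sheaves, $\mathrm{H}^0_b$ is exact on this sequence, giving the desired surjection.

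The main technical hurdle is the gluing step: checking that the local orthonormal presentations assemble into a genuine projective Banach sheaf, i.e.\ that the transition maps between affinoid charts of $\Ig_n(v)$ are isomorphisms of orthonormalizable Banach modules in the precise sense required by \cite[Appendix~A]{AIP}. Once this bookkeeping is in place, both parts of the proposition are formal consequences of Tate acyclicity for projective Banach sheaves on affinoids.
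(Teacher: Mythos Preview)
Your proposal is correct and follows exactly the same approach as the paper: the paper's proof simply observes that $\Ig_n(v)$ is affinoid and cites \cite[Proposition~8.2.3.3]{AIP}, whose argument is precisely the one you have sketched (local trivialization via the Iwahori decomposition, projective Banach sheaf formalism from \cite[Appendix~A]{AIP}, and the short exact sequence for specialization). You have essentially unpacked the cited reference.
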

\begin{proof}
Since $\Ig_n(v)$ is affinoid, this is proved exactly as \cite[Proposition 8.2.3.3]{AIP}.
\end{proof} 
Keeping the above notations, let $\mathcal{Z}$ be  the spectral variety associated to the characteristic power series $P(T)$ of the completely continuous operator $\U$ acting on the space $\M_w^{\kappa_{\mathcal{U}}^{\un}}$. It is the closed subspace of $\Spm(A) \times \mathbb{A}^1$ defined by $P(T)=0$. Buzzard's construction in \cite{buzz_eigen} gives immediately the following result.
\begin{prop} \label{prop: eigen proper}
There exist a rigid analytic space $\pr:\mc E \rightarrow \mathcal{Z}$, called the \emph{eigenvariety} which satisfies the following properties:
\begin{itemize}
\item $\mc E$ is equidimensional of dimension $\dim(A)$.
\item Let $\eta$ be the structural morphism $\mathcal{E} \rightarrow \mathrm{Spm}(A)$. The space $\mathcal{E}$ parameterizes finite slope systems of eigenvalues appearing in $\M_w^{\kappa_{\mathcal{U}}^{\un}}$; more precisely, for every $x \in \Spm(A)$, each point in $\eta^{-1}(x)$ corresponds to a system of eigenvalues for $\mathbf{T}$ inside $\M_w^{\kappa_{\mathcal{U}}^{\un}}\otimes_A \overline{\kappa(x)}$ which is of finite slope for $\U$.
\item The map $\mr{pr}$ is finite. The map $\eta$ is locally finite.
\item The module $\M_w^{\kappa_{\mathcal{U}}^{\un}}$ defines a coherent sheaf $\mathcal{M}$. Let $(x, \lambda) \in \mathcal{Z} \subseteq \Spm(A) \times \mathbb{A}^1$ be a point of the spectral variety. The fiber $\mathcal{M}_{(x,\lambda)}$ is the generalized eigenspace for $\lambda^{-1}$ inside $\M_w^{\kappa_{\mathcal{U}}^{\un}}\otimes_A \overline{\kappa(x)}$.
\end{itemize}
\end{prop}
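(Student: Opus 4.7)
The plan is to apply the eigenvariety machine of Buzzard \cite{buzz_eigen} verbatim, since Proposition~\ref{prop: Pr proper case} supplies exactly the input it requires: a projective Banach $A$-module $\M_w^{\kappa_{\mathcal{U}}^{\un}}$ equipped with a commuting action of the Hecke algebra $\mathbf{T}$ such that $\U \in \mathbf{T}$ acts completely continuously. Once the hypotheses of the machine are checked, all four bullet points are formal consequences of its output, as in \cite[Subsection~8.1.2 and Subsection~8.2]{AIP} or \cite[Section~7]{pel}.

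First I would recall the setup. The Fredholm series $P(T) \in A\{\{T\}\}$ of $\U$ cuts out the spectral variety $\mathcal{Z} \subseteq \Spm(A) \times \mathbb{A}^1$, which is a Fredholm hypersurface and hence equidimensional of dimension $\dim A$. Over the admissible cover of $\mathcal{Z}$ by open affinoids $\mathcal{Z}' = \Spm(B)$ such that $\mathcal{Z}' \to \Spm(A)$ is finite and such that $P(T)$ admits a slope decomposition $P = Q\cdot R$ over $B$ with $Q$ of degree $d$ and $(Q,R) = 1$, the kernel of $Q^\ast(\U)$ on $\M_w^{\kappa_{\mathcal{U}}^{\un}} \widehat{\otimes}_A B$ is a projective $B$-module of rank $d$ on which $\mathbf{T}$ acts, and I would define $\mathcal{E}$ over $\mathcal{Z}'$ as the relative $\Spm$ of the $B$-subalgebra of $\End_B(\ker Q^\ast(\U))$ generated by the image of $\mathbf{T}$. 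Glueing over the cover yields the rigid analytic space $\mathcal{E} \to \mathcal{Z}$; this glueing is exactly the content of \cite[Construction~5.7]{buzz_eigen}.

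I would then verify the four listed properties. Finiteness of $\pr$ is immediate since on each piece $\mathcal{E}$ is built as the spectrum of a finite $B$-algebra, and local finiteness of $\eta = \pr_1 \circ \pr$ then follows because $\mathcal{Z} \to \Spm(A)$ is locally finite. For equidimensionality of $\mathcal{E}$, one uses that $\mathcal{Z}$ is equidimensional of dimension $\dim A$ together with finiteness of $\pr$ and Chevalley--style arguments (see \cite[Lemma~5.9]{buzz_eigen}). The parameterization of finite slope systems of eigenvalues is tautological from the construction: a point of $\eta^{-1}(x)$ corresponds to an algebra homomorphism $\mathbf{T} \to \overline{\kappa(x)}$ such that the image of $\U$ is a nonzero root of $P(x,T)$, and the associated generalized eigenspace inside $\M_w^{\kappa_{\mathcal{U}}^{\un}} \otimes_A \overline{\kappa(x)}$ is nonzero. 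Finally the coherent sheaf $\mathcal{M}$ is defined piece by piece as the module $\ker Q^\ast(\U)$ above, and the identification of the stalk $\mathcal{M}_{(x,\lambda)}$ with the generalized $\lambda^{-1}$-eigenspace is the defining property of slope decompositions.

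The only step requiring genuine input beyond formal invocation of \cite{buzz_eigen} is checking that $\U$ acts on a \emph{projective} Banach module and is completely continuous in the sense of \textit{loc.~cit.}; both of these have already been established (projectivity by Proposition~\ref{prop: Pr proper case}, complete continuity in the paragraph after the definition of $\U$), so there is no genuine obstacle. The only subtlety worth flagging is that one must make $v$ and $w$ vary with $\mathcal{U}$ and check that the eigenvarieties built from different choices glue; this is done exactly as in \cite[Subsection~8.2]{AIP} using that the $\U$-operator improves the radius of overconvergence, so passing between different $(v,w)$ induces $\U$-equivariant isomorphisms on finite slope parts.
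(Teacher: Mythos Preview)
Your proposal is correct and matches the paper's approach exactly: the paper simply states that ``Buzzard's construction in \cite{buzz_eigen} gives immediately the following result'' and offers no further proof, so your elaboration of how the eigenvariety machine applies (using Proposition~\ref{prop: Pr proper case} for projectivity and the complete continuity of $\U$) is precisely the intended argument, just spelled out in more detail than the paper does.
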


\section{An example: orthogonal Shimura varieties} \label{sec: orthogonal}
In this section we want to make slightly more explicit our general construction for a (non PEL) Shimura variety of Hodge type: the $\Spin$ variety. We follow the presentation of \cite{madapusi}

Let $L$ be a $\Q$-vector space endowed  with a quadratic form $Q \colon L \to \Q$. We assume that $Q$ is non-degenerate of signature $(n,2)$. We denote by $C = C(L)$ the \emph{Clifford algebra} of $L$. It is the quotient of the tensor algebra $\bigotimes L$ by the subspace generated by the elements of the form $v \otimes v - Q(v) \cdot 1$. It is equipped with a natural inclusion $L \hookrightarrow C$ and it is universal among the $\Q$-algebras $R$ equipped with a morphism $f \colon L \to R$ such that $f(v)^2 = Q(v)$. The natural grading on $\bigotimes L$ induces a $\Z/2$-grading on $C$,
\[
C = C^+ \oplus C^-
\]
such that $C^+$ is a subalgebra of $C$. There is a unique anti-involution $^\star \colon C \to C$ that restricts to the identity on $L$. We fix an element $\delta \in C^\ast$ such that $\delta^\star = - \delta$. Let $\Trd \colon C \to \Q$ be the \emph{reduced trace map}. We define a bilinear map
\begin{gather*}
\Psi \colon C \times C \to \Q \\
(x,y) \mapsto \Trd(x\delta y^\star)
\end{gather*}
One can check that $\Psi$ is a symplectic form on $C$. We now define $G = \GSpin(L)$, a reductive group over $\Q$, by the formula
\[
G(R) \colonequals \GSpin(L)(R) \colonequals \{ x \in (C^+_R)^\ast \text{ such that } x (L_R) x^{-1} = L_R \}
\]
for any $\Q$-algebra $R$. We let $\GSpin(L)$ act on $C$ be left multiplication. We obtain in this way an inclusion of algebraic groups
\[
\GSpin(L) \hookrightarrow \GSp(C),
\]
where $\GSp(C)$ is the symplectic group associated to $\Psi$.

We now let $X$ be the space of oriented negative definite $2$-planes in $L_{\R}$. As explained in \cite[Subsection~3.1]{madapusi}, if $\delta$ satisfies certain technical conditions, we have that $X$ is a complex manifold that can be identified with a $G(\R)$-conjugacy class in $\Hom(\Res_{\C/\R} \mathbf{G}_{\mathrm{m}}, G_{\R})$. In particular, the pair $(G,X)$ is a Shimura datum. The reflex field is $\Q$ and, thanks to the embedding $G \hookrightarrow \GSp(C)$, we have that $(G,X)$ is of Hodge type (see \cite[Subsection~3.5]{madapusi} for how to embed $X$ in a union of Siegel upper half-spaces).

We assume that there is a maximal $\Z$-lattice $\Lambda \subseteq L$ that is self-dual at $p$ under the pairing induced by $Q$. We are then in the situation of Subsection~\ref{subsec: hodge type} and, after choosing a suitable compact open level $K = K^p \times K_p \subseteq G(\A_f)$, we get the Shimura variety $S^{\ad} \to \Spa(\Q_p,\Z_p)$. Fixing a Borel subgroup and a maximal torus $T$ of $G$ we can apply our general theory and we obtain, for any character $\kappa$ of $T$, the sheaf $\omega^{\kappa}$ on $\Ig_n(v)$. (Here $v$ is small enough, $\kappa$ is $w$-locally analytic and $n$ depends on $v$ and on $w$.) To define $\omega^{\kappa}$ it is essential to consider $\Theta^+$, $\Theta^+_{\et,n}$ etc. In the case of $\GSpin$-Shimura varieties, these objects can be made more explicit, as we are going to explain.

We write $H$ for the Clifford algebra $C$ viewed as a $G$-representation. It is associated with the universal abelian scheme $A \to \Ig_n(n)$, in the sense that the associated automorphic sheaf $\mathcal{E}(H)$ is the first de Rham cohomology $\Homol^1_{\dR}(A)$ of $A$. We can now explicitly describe the subspace $W \subseteq H^\otimes$ that characterize $G$. We let $G$ acts on $L$ by $g \cdot \ell \colonequals g\ell g^{-1}$. Note that the natural inclusion $L \hookrightarrow H$ \emph{is not} $G$-equivariant (since $G$ acts on $C$ via left multiplication). On the other hand, there is a natural action, by left multiplication, of $L$ on $C$ and in particular there is a $G$-equivariant embedding
\[
L \hookrightarrow \End_C(H) = H \otimes H^\vee.
\]
This gives the subspace $W \subseteq H^\otimes$, indeed one can prove that
\[
G = \{ g \in \GSp(C) \text{ such that } g(L) = L \}.
\]
We see that in this case $W$ is canonical. Attached to $L$ there is a variation of Hodge structure of type $(1,-1)$, $(-1,1)$, and $(0,0)$. The graded object of the automorphic sheaf associated to $L$ is the sheaf called $\Theta_{\dR}$ above. We have that $\Theta_{\dR}$ is a direct summand of $\omega \otimes \omega^\vee$, where $\omega \colonequals \omega_A$. We then have $\Theta_{\dR}^\sharp =\bigl( \omega^\sharp \oplus (\omega^\sharp)^\vee\bigr) \cap \Theta_{\dR}$. Thanks to the lattice $\Lambda \subseteq L$, we have a $\Z$-lattice $H^+ \subseteq H$. Over $\Ig_n(v)$, there is a canonical identification $H_1^{\D} = H^+/p^n$ that gives a direct summand $\Theta^+_{\et,n} \cong \Lambda/p \subseteq (H^+/p^n) \oplus (H^+/p^n)^\vee$. Proposition \ref{prop: sharp works} says that the image via the Hodge-Tate morphism of $\Theta^+_{\et,w}$ is inside $\Theta^\sharp_{\dR}/p^w$ and that the linearization of $\HT_w$ is an isomorphism. In particular $\Theta^\sharp_{\dR}$ is locally free. The space $\Iw_w$ parameterizes filtration of $\omega^\sharp$ such that the induced filtration on $\Theta^\sharp_{\dR}$ agrees, via $\HT_w$, with the fixed filtration on $\Lambda/p^w$ given by the choice of a Borel subgroup of $G$ (and similarly for a basis of the graded pieces).

One can also use the notion of \emph{tensors} rather then our subspace $W$: this is for example the point of view taken in \cite{kisin} and \cite{car-scho}. There exist an idempotent morphism $\pi \in \End(\End(H))= H^{\otimes 2} \otimes H^{\vee \otimes 2}$ such that $G \subseteq \GSp(C)$ is the stabilizer of $\pi$, see \cite[Lemma~1.4]{madapusi}. The image of $\pi$ is $L \subseteq \End(H)$. We have the de Rham realization $\pi_{\dR} \in \End(\End(\omega))$. Its image is $\Theta_{\dR}$. All these objects admit canonical integral structure, but we think that to work with modifications it is more convenient to consider directly $\Theta_{\dR}$ rather then $\pi_{\dR}$.
\subsection{Families of orthogonal modular forms and the eigenvariety} \label{subsec: orth form} We explain in this subsection the construction of the eigenvariety for the modular forms considered in Borcherds' theory as in \cite{howard-pera}. Since the situation is simpler than the general case, we only sketch the arguments.

Recall that by definition $\Theta = \Gr^\bullet(W)$. In the theory of Borcherds products one is interested, rather than in the graded object, to $\Xi \colonequals \Fil^1(W)$ that is a line giving the part of type $(1,-1)$ of the above Hodge structure. See for example \cite[Section 4.2]{howard-pera}. We adapt all our notations and construction to $\Xi$ in the obvious way. In particular we have $\Xi_{\dR}$, that is a line bundle. A simple variation of our construction allows to interpolate the sheaves $\Xi_{\dR}^{\otimes k}$, where $k \in \Z$, defining the sheaf $\Xi_{\dR}^{\kappa}$ for any $p$-adic integer $\kappa \in \Z_p$. Indeed, since we are interpolating line bundles, the construction is much easier and $\Xi_{\dR}^\kappa$ is a line bundle for any $\kappa$. (This is the case, for example, of \cite{over}.) In this situation the relevant weight space $\mathcal{W}_{\Xi}$ is a $1$-dimensional closed subspace of the weight space for $G$. If $\mathcal{U} = \Spm(A) \subseteq \mathcal{W}_{\Xi}$ is an affinoid with associated universal character $\kappa_{\mathcal{U}}^{\un}$, we have the sheaf $\Xi_{\dR}^{\kappa_{\mathcal{U}}^{\un}}$ on $\Ig_n(v) \times \mathcal{U}$. Contrary to the general situation, this sheaf is coherent and there is no notion of local analyticity.

By \cite{howard-pera}, the sheaf $\Xi^+_{\dR}$ extends to a fixed toroidal compactification $\Ig_n(v)^{\tor}$ of $\Ig_n(v)$, and so do the sheaf $\Xi_{\dR}^{\kappa}$ for all $\kappa \in \Z_p$. The same is true for $\Xi_{\dR}^{\kappa_{\mathcal{U}}^{\un}}$. The space of \emph{families of orthogonal modular forms} is defined as
\[
\M_{\Xi}^{\kappa_{\mathcal{U}}^{\un}} \colonequals \mathrm{H}^0(\Ig_n(v)^{\tor} \times \mathcal{U},\Xi_{\dR}^{\kappa_{\mathcal{U}}^{\un}}).
\]
Similarly we have the space $\M_{\Xi}^{\kappa}$ of modular forms of weight $\kappa$, for any $\kappa \in \Z_p$ and the specialization morphism $\M_{\Xi}^{\kappa_{\mathcal{U}}^{\un}} \to \M_{\Xi}^{\kappa}$ if $\kappa \in \mathcal{U}$. There is an action of the Hecke algebra $\mathbf{T}$ on $\M_{\Xi}^{\kappa_{\mathcal{U}}^{\un}}$ and on $\M_{\Xi}^{\kappa}$. The $\U$-operator is completely continuous. Let $\pi \colon \Ig_n(v)^{\tor} \times \mathcal{U} \to \Ig_n(v)^{\min} \times \mathcal{U}$ be the natural projection. (The minimal compactification is constructed for example in \cite{madapusi}.) We have that $\pi_\ast \Xi_{\dR}^{\kappa_{\mathcal{U}}^{\un}}$ is again a coherent sheaf and that $\Ig_n(v)^{\min}$ is affinoid, so Tate's acyclicity holds. This implies that $\M_{\Xi}^{\kappa_{\mathcal{U}}^{\un}}$ is a projective $A$-module and we can apply Buzzard's construction obtaining the following result.
\begin{prop} \label{prop: eigen orth}
There exist an eigenvariety $\mathcal{E} \to \mathcal{U}$ that parameterizes finite slope system of eigenvalues appearing in $\M_{\Xi}^{\kappa_{\mathcal{U}}^{\un}}$. Moreover, if $\kappa \in \mathcal{U}$ then the specialization morphism $\M_{\Xi}^{\kappa_{\mathcal{U}}^{\un}} \to \M_{\Xi}^{\kappa}$ is surjective.
\end{prop}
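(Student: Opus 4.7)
The plan is to verify the two hypotheses needed to feed the setup into Buzzard's eigenvariety machine from \cite{buzz_eigen} and then read off both assertions from his construction, exactly in the spirit of Proposition \ref{prop: Pr proper case} (which handled the general Hodge type case under the properness assumption).

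First I would establish the module-theoretic input, namely that $\M_{\Xi}^{\kappa_{\mathcal{U}}^{\un}}$ is a projective Banach $A$-module. For this I would use the already-observed fact that $\pi_\ast \Xi_{\dR}^{\kappa_{\mathcal{U}}^{\un}}$ is a coherent sheaf on $\Ig_n(v)^{\min} \times \mathcal{U}$ and that $\Ig_n(v)^{\min}$ is affinoid (the ordinary locus of the minimal compactification being affine, as in the setup preceding Proposition \ref{prop: Pr proper case}). By Tate's acyclicity theorem, the global sections $\mathrm{H}^0(\Ig_n(v)^{\min}\times \mathcal{U}, \pi_\ast\Xi_{\dR}^{\kappa_{\mathcal{U}}^{\un}}) = \M_{\Xi}^{\kappa_{\mathcal{U}}^{\un}}$ form a finitely generated $A \widehat{\otimes} \mathcal{O}(\Ig_n(v)^{\min})$-module, and hence a Banach $A$-module admitting a countable orthonormal basis (ON-able, in Buzzard's terminology). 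By \cite[Lemma 2.8]{buzz_eigen} a direct summand of an ON-able module is projective, and coherence plus flatness considerations on the weight space give the desired projectivity.

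Next I would invoke that the $\U$-operator is completely continuous on $\M_{\Xi}^{\kappa_{\mathcal{U}}^{\un}}$ (this is already recorded in the paragraph preceding the proposition, since $\U$ improves the radius of overconvergence exactly as in the general case). Combined with projectivity, this is precisely the input needed for Buzzard's machine: one forms the characteristic power series $P(T) \in A\{\{T\}\}$ of $\U$, the associated spectral variety $\mathcal{Z} \subseteq \mathcal{U} \times \mathbb{A}^1$ cut out by $P(T)=0$, and the eigenvariety $\mathcal{E} \to \mathcal{Z}$ from the commuting action of the Hecke algebra $\mathbf{T}$ on $\M_{\Xi}^{\kappa_{\mathcal{U}}^{\un}}$. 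Buzzard's construction then furnishes the parameterization of finite slope systems of eigenvalues, yielding the first part of the proposition.

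For the surjectivity of specialization, I would argue as in \cite[Proposition 8.2.3.3]{AIP} and the proof of Proposition \ref{prop: Pr proper case}: if $\kappa \in \mathcal{U}$ corresponds to a maximal ideal $\mathfrak{m} \subseteq A$, then specialization is the map $\M_{\Xi}^{\kappa_{\mathcal{U}}^{\un}} \to \M_{\Xi}^{\kappa_{\mathcal{U}}^{\un}}/\mathfrak{m}\M_{\Xi}^{\kappa_{\mathcal{U}}^{\un}}$, and the right-hand side identifies with $\M_{\Xi}^{\kappa}$ by base change of the coherent sheaf $\pi_\ast \Xi_{\dR}^{\kappa_{\mathcal{U}}^{\un}}$ along $\mathcal{U} \to \{\kappa\}$ (this identification uses that the sheaf is coherent, which in the Borcherds/line-bundle case is automatic). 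Surjectivity onto the quotient is then a tautology. The main delicate point is the base-change compatibility $\M_{\Xi}^{\kappa_{\mathcal{U}}^{\un}} \otimes_A A/\mathfrak{m} \cong \M_{\Xi}^{\kappa}$, which I expect to be the principal technical step; it reduces to the statement that the formation of $\Xi_{\dR}^{\kappa_{\mathcal{U}}^{\un}}$ commutes with base change on $\mathcal{U}$, and this is precisely the content of the construction of the sheaves $\Xi_{\dR}^{\kappa}$ as (line-bundle) specializations of a coherent analytic family — which is much easier here than in the general higher-rank case, as already emphasized in the paragraph introducing $\Xi_{\dR}^{\kappa_{\mathcal{U}}^{\un}}$.
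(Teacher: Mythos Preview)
Your proposal is correct and follows essentially the same route as the paper: the paper itself gives no proof after the proposition, but the preceding paragraph records exactly the ingredients you use---coherence of $\pi_\ast \Xi_{\dR}^{\kappa_{\mathcal{U}}^{\un}}$, affinoidness of $\Ig_n(v)^{\min}$, Tate's acyclicity to get projectivity, complete continuity of $\U$, and then Buzzard's machine---so your write-up is simply a more detailed unpacking of that sketch.
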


\bibliographystyle{amsalpha}
\bibliography{biblio}

\end{document}